\numberwithin{equation}{section}
\newtheorem{thm}{Theorem}
\newtheorem{theorem}{Theorem}[section]
\newtheorem{lemma}[theorem]{Lemma}
\newtheorem{proposition}[theorem]{Proposition}
\newtheorem{corollary}[theorem]{Corollary}
\newtheorem{defn}[theorem]{Definition}
\def\lcm{{\rm lcm}}
\begin{document}
\title{The pro-supersolvable topology on a free group: deciding denseness}
\author{Claude Marion, Pedro V. Silva, Gareth Tracey}
\maketitle

\begin{center}\small
2020 Mathematics Subject Classification: 20E05, 20E10, 20F10, 20F16, 11D79

\bigskip

Keywords: subgroups of the free group, pro-supersolvable topology, denseness, polynomial congruences
\end{center}

{\textbf{Abstract.}}
\noindent  Let $F$ be a free group of arbitrary rank and let $H$ be a finitely generated subgroup of $F$. Given a pseudovariety $\mathbf{V}$ of finite groups, i.e. a class of finite groups closed under taking subgroups, quotients and finitary direct products, we endow $F$ with its pro-$\mathbf{V}$ topology. Our main result states that it is decidable whether $H$ is pro-$\mathbf{Su}$ dense, where $\mathbf{Su}\subset \mathbf{S}$  denote respectively the pseudovarieties of all finite supersolvable groups and all finite solvable groups. Our motivation stems from the following  open problem: is it decidable whether $H$ is pro-$\mathbf{S}$ dense? 

\section{Introduction}\label{intro}

The use of topology to deduce group theoretic properties of free groups has been a hugely fruitful endeavour. In Marshall Hall’s seminal paper \cite{Hall50}, a number of natural topologies on a group $G$ are introduced, and used to prove a series of interesting results.

Let $\mathbf{V}$ be a pseudovariety of finite groups and let $G$ be any group. Then $G$ can be endowed with the pro-$\mathbf{V}$ topology. In other words, $G$ is a topological group where the normal subgroups $K$ of $G$ with $G/K \in \mathbf{V}$ form a basis of neighbourhoods of the identity. Hall's work suggests that working with such topologies can lead to surprising group theoretic results, particularly when $G$ is a free group. For example, by working with the pseudovariety of all finite groups (referred to in the literature as the profinite topology), Hall proves that the property of a subgroup of a free group $F$ being the intersection of subgroups of finite index is a topological property, namely being closed in the profinite topology \cite[Theorem 3.4]{Hall50}.

For this reason, the past 70 years or so have seen considerable effort poured into five key questions that have arisen from Hall's work. These questions, for a subgroup $H$ of a free group $F=F(X)$, are as follows:\\ 

\noindent\textbf{Question 1.} Does there exist an algorithm  deciding whether or not  $H$ is $\mathbf{V}$-dense?\\

\noindent \textbf{Question 2.} Does there exist an algorithm deciding whether or not  $H$ is $\mathbf{V}$-closed?\\

\noindent\textbf{Question 3.} Is it decidable whether or not  the pro-$\mathbf{V}$ closure  $\mathrm{Cl}_\mathbf{V}^{F}(H)$ of $H$ is finitely generated?\\

\noindent\textbf{Question 4.}  Given $w\in  F$, is there an algorithm that decides whether or not $w$ belongs to  $\mathrm{Cl}_\mathbf{V}(H)$? \\

\noindent\textbf{Question 5.} Suppose $\mathrm{Cl}_\mathbf{V}^{F}(H)$ is finitely generated. Does  there exist an algorithm that computes a basis (in terms of $X$) for  $\mathrm{Cl}_\mathbf{V}^{F}(H)$?\\

Here, the subgroup $H$ is said to be $\mathbf{V}$-$\mathcal{P}$, for a topological property $\mathcal{P}$, if $H$ has property $\mathcal{P}$ in the pro-$\mathbf{V}$ topology.

When considering Questions 1-5, if $F$ is residually $\mathbf{V}$, i.e. $\mathrm{Cl}_\mathbf{V}^{F}(1)=1$, then  by \cite[Corollary 2.4]{MSTcyclic} one can reduce to the case where $F$ is of finite rank. In particular this holds for  $\mathbf{V}$ nontrivial and extension-closed, or  when $\mathbf{V}$ is the pseudovariety of all finite nilpotent groups; or the pseudovariety of all finite supersolvable groups.

Significant progress has been made on these questions in the case where $\mathbf{V}$ is extension-closed (see \cite{RZ,MSW,MSTmeta}). In particular, in this case, Question 3 has a positive answer \cite{RZ}, while Questions 4 and 5 are equivalent \cite{MSW}. Moreover, Questions 4 and 5 have positive answers when $\mathbf{V}$ is either the pseudovariety of all finite $p$-groups for a fixed prime $p$ (which is extension-closed); or the pseudovariety of all finite nilpotent groups (which is not extension-closed). These facts are proved in \cite{RZ} and \cite{MSW}, respectively. We also mention that in \cite{MSTmeta}, it is explicitly proved that Questions 1, 2 and 5 are equivalent when $\mathbf{V}$ is extension-closed.

In \cite{MSTmeta} we show that Questions 1-5 have positive answers when $\mathbf{V}$ is the pseudovariety of all finite abelian groups; or the pseudovariety of all finite metabelian groups (and neither of them is extension-closed).

For applications of these questions to graph theory and monoid theory, see \cite[Section 6]{HL} and \cite[Section 5]{MSW}.

Apart from the fact that they are equivalent, little is known about Questions 1, 2 and 5 when $\mathbf{V}$ is the pseudovariety of all finite solvable groups. Motivated by this, we focus, in this paper, on the subclass $\mathbf{Su}$ of all finite supersolvable groups. Recall that a finite group is supersolvable if every of its chief factors is cyclic (of prime order). In this case, we can answer Question 1 in the affirmative: 

\begin{thm}\label{t:main} 
Let $F$ be a free group of arbitrary rank. Given a finitely generated subgroup $H$ of $F$, it is decidable whether or not $H$ is $\mathbf{Su}$-dense. 
\end{thm}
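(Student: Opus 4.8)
First I would reduce to the finite-rank case as the paper indicates (the class $\mathbf{Su}$ is nontrivial and $F$ is residually supersolvable, so $\mathrm{Cl}_{\mathbf{Su}}^F(1)=1$ and \cite[Corollary 2.4]{MSTcyclic} applies). So assume $F=F(X)$ with $|X|=n<\infty$. Denseness of $H$ in the pro-$\mathbf{Su}$ topology means precisely that $H$ surjects onto every finite supersolvable quotient of $F$; equivalently, for every finite supersolvable group $G$ and every epimorphism $\varphi\colon F\to G$, we have $\varphi(H)=G$. Since $H$ is finitely generated, we may represent it by its Stallings automaton, and the quotient maps $F\to G$ are controlled by the abelianization data together with the way $H$ sits inside successive terms of the lower central / derived-type filtrations.

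**Key structural observation.** The crucial point is that supersolvable groups are built from cyclic chief factors, and a finite supersolvable group has a normal Sylow subgroup for its largest prime and, more usefully, admits a normal series with cyclic factors of prime order arranged in decreasing order of primes (a Sylow tower). I would exploit the following: $H$ is pro-$\mathbf{Su}$ dense in $F$ if and only if $H$ is pro-$\mathbf{Ab}$ dense (i.e.\ $H$ maps onto $F^{ab}=\mathbb{Z}^n$, equivalently the abelianization matrix of a basis of $H$ has a unimodular $n\times n$ minor — this is decidable and is the metabelian/abelian case already handled in \cite{MSTmeta}) \emph{together with} a condition detecting obstructions coming from the action on one more cyclic chief factor. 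Concretely, the obstruction to lifting a surjection $F\to G$ (with $G$ supersolvable) to $F\to G.C_p$ where $C_p$ is a new cyclic chief factor with $G$ acting through a $1$-dimensional representation over $\mathbb{F}_p$ is a cohomological/module-theoretic condition, and because the chief factor is cyclic the relevant module is $1$-dimensional. This means the problem of whether $H$ hits all such extensions reduces to whether certain values $v_1^{e_1}\cdots v_k^{e_k}\bmod p$ can be made to avoid being $\equiv 1$ — i.e.\ to the solvability of a system of \emph{polynomial congruences} (hence the keyword in the abstract, and presumably a result of the form "the set of primes $p$ for which a fixed polynomial congruence system has a solution is decidable" via Chebotarev / effective bounds).

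**Assembling the algorithm.** The plan is: (i) compute the Stallings automaton of $H$ and the integer matrix $M$ giving a basis of $H$ inside $\mathbb{Z}^n$; (ii) check pro-abelian denseness ($M$ has a unimodular maximal minor) — if it fails, $H$ is not pro-$\mathbf{Su}$ dense and we stop; (iii) if it holds, enumerate the finitely many "critical" obstruction types, where each type corresponds to a prime $p$ and a $1$-dimensional $\mathbb{F}_p[\mathbb{Z}^n]$-module determined by a character $\chi\colon \mathbb{Z}^n\to\mathbb{F}_p^\times$, and translate "$H$ surjects onto the corresponding extension of every supersolvable quotient" into a statement about solvability of an associated family of polynomial congruences in the coordinates of $H$'s generators; (iv) invoke the effective number-theoretic input (decidability of whether a parametrized polynomial congruence has a solution for infinitely many / all relevant primes) to decide each such condition. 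The main obstacle — and the heart of the paper — is step (iii)--(iv): one must show that the infinitely many supersolvable quotients of $F$, and the infinitely many primes that can appear in their chief series, can be organised into \emph{finitely many} uniformly-described congruence conditions, and then that each such condition is algorithmically decidable. I expect this to require a careful induction on the length of a chief series, tracking how a surjection onto a supersolvable quotient of "height $k$" can fail to extend, and a Ramanujan-type or Lang--Weil/Chebotarev argument to handle the prime parameter, which is presumably where the hypothesis "polynomial congruences" in the keywords does its work.
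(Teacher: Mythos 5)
Your outline gets the broad shape right (reduce to finite rank, use $\mathbf{Ab}$-denseness as a necessary and decidable first filter, then translate the remaining obstructions into polynomial congruence conditions indexed by primes), but the two steps you yourself flag as ``the heart of the paper'' are exactly the ones left unproved, and the route you propose for them is not the one that works. First, you suggest handling all finite supersolvable quotients by an induction on chief series length, tracking cohomological obstructions to lifting a surjection $F\to G$ to $F\to G.C_p$; this is precisely the combinatorial explosion the paper avoids. The key structural input (Lemma \ref{lem:dense}, due to Garonzi) is that denseness in a quotient-closed class need only be tested against the \emph{primitive} groups in the class, and a primitive supersolvable group is exactly $\mathbb{Z}/p\mathbb{Z}\rtimes C$ with $C\le\mathrm{Aut}(\mathbb{Z}/p\mathbb{Z})$ (Lemma \ref{l:supprim}). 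So no induction over heights is needed: one only has to decide, uniformly in $p$, whether $H$ hits every metacyclic quotient of this shape. Even then, making this a \emph{finite} family of polynomial systems requires real work: the paper proves a factorization lemma (Lemma \ref{factor}) showing that any surjection onto such a quotient factors, after one of the $d$ transpositions $\tau_t$, through a $p$-hypersolvable group $V_{d-1}\rtimes_\alpha C$ with the basis elements mapping to prescribed ``coordinate'' elements, and then encodes surjectivity/non-surjectivity of $\psi$ on $H$ via determinantal conditions on a matrix of commutator images over the monoid ring $\mathbb{Z}[\mathbb{Z}^d\times\mathbb{N}^{d-1}]$ (Lemma \ref{roots}), giving finitely many systems $\mathcal{P}_{Y_0}(\tau_t(H))$ indexed by $t$ and a minor $Y_0$. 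None of this reduction is present, even in outline, in your step (iii).

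Second, the number-theoretic input you invoke is not quite the right statement. What must be decided is whether there exists \emph{some} prime $p$ for which a fixed finite system of integer polynomials has a common root modulo $p$. Chebotarev/Lang--Weil style density arguments do not settle this: the delicate case is when the associated set of primes $S(J)$ is finite but possibly nonempty. The paper handles it by Jarviniemi's theorem (reducing $S(J)$ to $S(f)$ for a single one-variable $f$), Gr\"obner bases plus van den Dries to decide finiteness, and then an \emph{effective} Nullstellensatz to produce an integer $a$ with $\sum_i f_i\ell_i=a$, so that any prime in $S(J)$ must divide $a$ and can be checked directly (Corollary \ref{c:sys}). So the proposal, as written, has a genuine gap at both of its load-bearing steps.
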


The layout  is as follows. In Section \ref{s:p}, we fix some notation, collect some useful facts about pro-$\mathbf{V}$ topologies, and give some preliminary group-theoretical results. In Section \ref{s:sys}, we give some results on the set of prime numbers $p$ for which a given system of polynomial equations in several variables over the integers has a common solution modulo $p$. In Section \ref{s:rl}, we prove a reduction lemma due to Martino Garonzi and define $p$-hypersolvable groups (a useful tool in the proof of Theorem \ref{t:main}).  In Section \ref{s:tl}, we prove a technical lemma needed in the proof of Theorem \ref{t:main}. Finally, we prove Theorem \ref{t:main} in Section \ref{s:mainproof}.  

We are grateful to Martino Garonzi for his insight and Lemma \ref{lem:dense}.

\section{Preliminaries}\label{s:p}

We  first fix  some notation used throughout the paper unless otherwise stated. 
 We let  $F$ be a free group of finite rank $d$ and let $H$ be a given finitely generated subgroup of $F$.   We let $X=\{x_1,\ldots,x_d\}$ be a basis for $F$ and $W=\{w_1,\ldots,w_e\}$ be a basis for $H$ where, for each $1\leq i \leq e$, $w_i$ is a reduced  word over $X\cup X^{-1}$.  
 Recall that if $F$ is a free group of  possibly arbitrary rank, an element of $F$ is called a primitive element if it belongs to a basis of $F$.
  Given a pseudovariety $\mathbf{V}$ of finite groups, we endow $F$ with its pro-$\mathbf{V}$ topology.
  
Let $G$ be a  group. We let $d(G)$ denote the minimal number of generators for $G$, $|G|$ denote the order of $G$, and for an element $g\in G$ we let ${\rm ord}(g)$ denote the order of $g$ ($d(G)$, $|G|$ and ${\rm ord}(g)$ are possibly infinite). 
If  $g, h\in G$, we let $[g,h]=ghg^{-1}h^{-1}$.
 We also denote by $\mathrm{Aut}(G)$ the group of automorphisms of $G$. 
 If $H$ is a subgroup of $G$ then the core $\textrm{Core}_G(H)$ of $H$ in $G$ is the largest normal subgroup of $G$ contained in $H$ and is equal to $\bigcap_{u\in G} u^{-1}Hu$.
 
Given a positive integer $a$, we let $\mathbb{Z}/a\mathbb{Z}$ denote a cyclic group of order $a$. When clear from the context, we consider $\mathbb{Z}/a\mathbb{Z}$ as a ring and denote by $(\mathbb{Z}/a\mathbb{Z})^*$ its group of units. 
Moreover given a prime $p$ and a positive integer $n$, we let $V_n:=(\mathbb{Z}/p\mathbb{Z})^n$ denote the elementary abelian $p$-group of order $p^n$ that can also be seen as a vector space over $\mathbb{Z}/p\mathbb{Z}$ of dimension $n$.

We let $\mathbf{Ab}$ denote the pseudovariety of all finite abelian groups. Given  a positive integer $a$,  $\mathbf{Ab}(a)$ is the pseudovariety of  all finite abelian groups of exponent dividing $a$. In particular, given a prime $p$, $\mathbf{Ab}(p)$ is the pseudovariety of all finite elementary abelian $p$-groups.

Given integers $a$ and $b$ not both equal to $0$, $\mathrm{gcd}(a,b)$ denotes the greatest common divisor of $a$ and $b$, while if $a$ and $b$ are nonzero $\textrm{lcm}(a,b)$ denotes the lowest positive common multiple of $a$ and $b$. We naturally extend the notation to tuples of integers. More precisely, if $(a_1,\ldots,a_n)$ is a tuple of integers not all equal to 0, then $\mathrm{gcd}(a_1,\ldots,a_n)$ is the greatest positive integer dividing each of $a_1,\ldots,a_n$, whereas if $(b_1,\ldots,b_n)$ is a tuple of nonzero integers then $\mathrm{lcm}(b_1,\ldots,b_n)$ is smallest positive integer which is a multiple of each of  $b_1,\ldots,b_n$.  

For $k\in \mathbb{N}$ and $1\leq i \leq k$, let $\iota_i \in \mathbb{Z}^{k}$ have $1$ in the $i$th coordinate and 0 everywhere else. Given a prime $p$, we sometimes view $\iota_i$ as an element of $(\mathbb{Z}/p\mathbb{Z})^k$.
We denote by $\mathbb{P}$  the set of all primes.

We now collect some facts on pro-$\mathbf{V}$ topologies. For a pseudovariety $\mathbf{V}$, considering finite groups endowed with the discrete topology, the pro-$\mathbf{V}$ topology on a group $G$ is defined as the coarsest topology which makes all homorphisms from $G$ into elements of $\mathbf{V}$ continuous. Equivalently $G$ is a topological group and the normal subgroups $K$ of $G$ such that $G/K\in \mathbf{V}$ form a basis of neighbourhoods of the identity. 
A pseudovariety $\mathbf{V}$ is extension-closed if whenever $G$ is a finite group with a normal subgroup $N$ such that $N$ and $G/N$ belong to $\mathbf{V}$ then $G$ also belongs to $\mathbf{V}$. The \textit{trivial pseudovariety} consists of all trivial groups. 
A subgroup $H$ of $G$ is pro-$\mathbf{V}$ open if and only if it is pro-$\mathbf{V}$ clopen, and if and only if $G/\textrm{Core}_G(H)$ belongs to $\mathbf{V}$.  Note that $H$ is pro-$\mathbf{V}$ closed if and only if, for every $g\in G\setminus H$, there exists some pro-$\mathbf{V}$ clopen $K\leq G$ such that $H \leq K$ and $g\not \in K$. Moreover a subgroup $H$ of $G$ is pro-$\mathbf{V}$ dense if and only if $HN=G$ for every normal subgroup $N$ of $G$ with $G/N\in \mathbf{V}$.
Given $S \subseteq G$, we also denote by  $\mathrm{Cl}_{\mathbf{V}}^G(S)$ the $\mathbf{V}$-closure of $S$ in $G$. If $H \leq G$, then also $\mathrm{Cl}_{\mathbf{V}}^G(H) \leq G$ \cite[Theorem 3.3]{Hall50}.

Suppose that $\mathbf{V}$ and $\mathbf{W}$ are pseudovarieties of finite groups. Then:
\begin{equation}\label{e:cl}
\mbox{If $\mathbf{W} \subseteq \mathbf{V}$, then $\mathrm{Cl}_{\mathbf{V}}^G(S) \subseteq \mathrm{Cl}_{\mathbf{W}}^G(S)$ for every $S \subseteq G$.}
\end{equation}
This follows from the pro-$\mathbf{W}$ topology being coarser than the pro-$\mathbf{V}$ topology on $G$.

We will also use the following result.
 
\begin{lemma}
\label{autospres}
Let {\bf V} be a pseudovariety of finite groups, let $H$ be a subgroup of a group $G$ and let $\varphi \in \mathrm{Aut}(G)$. If $H$ is {\bf V}-dense in $G$, then so is $\varphi(H)$.
\end{lemma}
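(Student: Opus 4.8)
The plan is to argue directly from the characterisation of denseness recorded just above: a subgroup $K\leq G$ is pro-$\mathbf{V}$ dense if and only if $KN=G$ for every normal subgroup $N$ of $G$ with $G/N\in\mathbf{V}$. So fix such an $N$; the goal is to show $\varphi(H)N=G$.

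The key observation is that $\varphi$ carries the relevant normal subgroups to one another. Since $\varphi\in\mathrm{Aut}(G)$, the preimage $\varphi^{-1}(N)$ is again a normal subgroup of $G$, and $\varphi$ induces an isomorphism $G/\varphi^{-1}(N)\xrightarrow{\ \sim\ }G/N$; in particular $G/\varphi^{-1}(N)\in\mathbf{V}$ because $\mathbf{V}$ is closed under isomorphism. Applying the denseness hypothesis for $H$ to the normal subgroup $\varphi^{-1}(N)$ gives $H\,\varphi^{-1}(N)=G$. Now apply $\varphi$ to both sides: since $\varphi$ is a bijective homomorphism, $\varphi(H\,\varphi^{-1}(N))=\varphi(H)\,\varphi(\varphi^{-1}(N))=\varphi(H)\,N$, while $\varphi(G)=G$. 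Hence $\varphi(H)N=G$, and as $N$ was arbitrary, $\varphi(H)$ is pro-$\mathbf{V}$ dense.

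There is essentially no obstacle here: the only point to be careful about is that $\varphi$ genuinely maps the basis of neighbourhoods of the identity in the pro-$\mathbf{V}$ topology onto itself (equivalently, that it is a homeomorphism for this topology), which is exactly what the isomorphism $G/\varphi^{-1}(N)\cong G/N$ guarantees. One could phrase the whole argument topologically — automorphisms are homeomorphisms of $(G,\text{pro-}\mathbf{V})$, and homeomorphisms preserve denseness — but the direct computation above is shorter and self-contained.
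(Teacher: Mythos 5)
Your argument is correct and is essentially identical to the paper's proof: both fix a normal subgroup $N$ with $G/N\in\mathbf{V}$, pull it back to $\varphi^{-1}(N)$ (noting $G/\varphi^{-1}(N)\cong G/N\in\mathbf{V}$), apply denseness of $H$ to get $H\varphi^{-1}(N)=G$, and then apply $\varphi$ to conclude $\varphi(H)N=G$. No gaps; nothing further is needed.
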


\begin{proof}
Assume that $H$ is {\bf V}-dense in $G$. Let $N \unlhd G$ be such that $G/N \in {\bf V}$. Then $\varphi^{-1}(N) \unlhd G$ and $G/\varphi^{-1}(N) \cong G/N \in {\bf V}$. Since $H$ is {\bf V}-dense in $G$, we have $H\varphi^{-1}(N) = G$ and so 
$$(\varphi(H))N = \varphi(H\varphi^{-1}(N)) = \varphi(G) = G.$$
Therefore $\varphi(H)$ is {\bf V}-dense in $G$.
\end{proof}

In the remainder of this section we give other group-theoretical results.
The result below is well known as Gaschutz's lemma.

\begin{lemma}\label{l:gaschutz} {\rm{\cite{Gaschutz}}}
Let $\pi: G\rightarrow K$ be  a surjective homomorphism of finite groups. Suppose $m\geq d(G)$  and  $(k_1,\ldots,k_m)$ is a generating tuple for $K$. Then there exists a generating tuple 
$(g_1,\ldots,g_m)$ for $G$ with $\pi(g_i)=h_i$ for $1\leq i \leq m$.
\end{lemma}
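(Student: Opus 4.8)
Set $N := \ker\pi$, so that $\pi$ factors as $G \twoheadrightarrow G/N \xrightarrow{\ \sim\ } K$. The plan is to prove, by induction on $|N|$, the sharper symmetric statement: \emph{for every finite group $G$, every $N \trianglelefteq G$ and every $m \geq d(G)$, the number of generating $m$-tuples of $G$ that project under $G \twoheadrightarrow G/N$ onto a given generating $m$-tuple of $G/N$ does not depend on that tuple}; denote this common value by $a(G,N)$. Granting this, let $\nu_m(Q)$ be the number of generating $m$-tuples of a finite group $Q$. Since the image of a generating tuple is a generating tuple, every generating $m$-tuple of $G$ maps to a generating $m$-tuple of $G/N$, and each such tuple of $G/N$ has a fibre of size $a(G,N)$; hence $\nu_m(G) = a(G,N)\,\nu_m(G/N)$. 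As $m \geq d(G)$ forces $\nu_m(G) \geq 1$, we conclude $a(G,N) \geq 1$, which — applied with the given data — is the lemma.

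The base case $N = 1$ is trivial. When $N \neq 1$, choose a minimal normal subgroup $M$ of $G$ contained in $N$, giving the tower $G \twoheadrightarrow G/M \twoheadrightarrow G/N$. Applying the inductive hypothesis to $G/M$, which satisfies $d(G/M) \leq d(G) \leq m$, with its normal subgroup $N/M$, shows that every generating $m$-tuple $\bar{\mathbf k}$ of $G/N$ is covered by a fixed number $c_1$ of generating $m$-tuples of $G/M$. A generating $m$-tuple of $G$ lying over $\bar{\mathbf k}$ projects to a generating $m$-tuple of $G/M$ lying over $\bar{\mathbf k}$, so partitioning by this intermediate image reduces the claim to the case $N = M$ minimal normal in $G$, with $a(G,N) = c_1\cdot a(G,M)$.

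So suppose $N = M$ is minimal normal in $G$ and fix a lift $(g_1^0,\dots,g_m^0)$ of a generating $m$-tuple $\bar{\mathbf k}$ of $G/M$; the $|M|^m$ lifts of $\bar{\mathbf k}$ are exactly the tuples $(g_1^0 n_1,\dots,g_m^0 n_m)$ with $(n_1,\dots,n_m)\in M^m$. A lift generates $G$ iff it lies in no maximal subgroup of $G$; a maximal subgroup containing $M$ contains no lift (its image is proper in $G/M$, yet the lifts generate $G/M$), so a lift generates $G$ iff it avoids every maximal subgroup $L$ with $M\not\leq L$, equivalently every maximal supplement $LM = G$ of $M$. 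The combinatorial heart is the observation that for any subgroup $L$ with $LM = G$ the set $\{\, n \in M : g_i^0 n \in L\,\}$ equals $(g_i^0)^{-1}L\cap M$, a coset of $L\cap M$ in $M$, hence of size $|L\cap M|$ independent of the choice of $g_i^0$; so $L$ contains exactly $|L\cap M|^m$ of the lifts, while a subgroup with $LM\neq G$ contains none. Running inclusion--exclusion over the finite set of maximal supplements of $M$ in $G$ then expresses the number of generating lifts of $\bar{\mathbf k}$ as an alternating sum of terms $\bigl|\,\bigcap_{L\in S}L\,\cap\,M\,\bigr|^m$ (or $0$), a number depending only on $G$ and $M$. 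This gives independence of $\bar{\mathbf k}$ and closes the induction.

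The step I expect to need the most care is this last inclusion--exclusion when $M$ is non-abelian, say $M\cong T^k$ with $T$ non-abelian simple: here maximal supplements of $M$ need not be complements ($L\cap M$ can be a non-trivial diagonal subgroup), so intersections of supplements must genuinely be tracked and one must check the bookkeeping yields the same total for every $\bar{\mathbf k}$. When $M$ is abelian the relevant $L$ are precisely the complements of $M$ in $G$, each meeting the lift set in a single point and no two sharing a lift, and the count collapses to $|M|^m$ minus the number of complements — the classical case.
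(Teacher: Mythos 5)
The paper does not prove this lemma at all: it is quoted verbatim from Gasch\"utz's 1956 paper, so there is no internal argument to compare against and your proof must be judged on its own — and it is correct. The key computation is sound and, importantly, it already resolves the worry you raise at the end: for \emph{any} subgroup $D\le G$ and any generating $m$-tuple $\bar{\mathbf k}$ of $G/M$, the number of lifts of $\bar{\mathbf k}$ contained in $D$ is $|D\cap M|^m$ when $DM=G$ (your coset argument, applied coordinatewise) and $0$ when $DM\neq G$ (a lift inside $D$ would give $G/M=\langle\bar{\mathbf k}\rangle\le DM/M$), and in both cases the value is independent of $\bar{\mathbf k}$. Applying this to every intersection $\bigcap_{L\in S}L$ of maximal supplements makes each term of your inclusion--exclusion depend only on $(G,M,S)$, whether or not $M$ is abelian — minimality and abelianness of $M$ never enter. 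For the same reason the reduction to a minimal normal subgroup, and hence the whole induction on $|N|$, is superfluous: the sieve argument works verbatim for an arbitrary normal subgroup $N$, so you could state and prove the tuple-independence of $a(G,N)$ in one step (it would be worth adding the one-sentence justification that intersections $D$ with $DM\neq G$ contain no lift, which you assert only for single maximal supplements). For comparison, Gasch\"utz's original route — the one the paper's citation points to — avoids inclusion--exclusion altogether: one shows by induction over subgroups that the number $\phi_{G,N}(\bar{\mathbf k})$ of generating lifts is independent of $\bar{\mathbf k}$ using the identity $|N|^m=\sum_{H\le G,\ HN=G}\phi_{H,H\cap N}(\bar{\mathbf k}_H)$, where $\bar{\mathbf k}_H$ corresponds to $\bar{\mathbf k}$ under $H/(H\cap N)\cong G/N$; that version is shorter and needs no case analysis, while your maximal-subgroup sieve yields a more explicit alternating-sum formula. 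The final deduction ($m\ge d(G)$ gives a generating $m$-tuple of $G$, hence $a(G,N)\ge 1$) is the same in both treatments. (Minor point: the statement as printed has a typo, $\pi(g_i)=h_i$ should read $\pi(g_i)=k_i$.)
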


We apply Gaschutz's lemma in the next result.

\begin{lemma}\label{l:cyclicga}
Let $G$ be a finite cyclic group and let $\{g_1,\ldots,g_m\}$ be a generating set for $G$. Then for $1\leq i <m$ there exist integers $r_i$  such that $g_m\prod_{i=1}^{m-1}g_i^{r_i}$ generates $G$. 
\end{lemma}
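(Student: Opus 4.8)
The plan is to reduce the statement to an elementary question about linear congruences and settle it with the Chinese Remainder Theorem. Identify $G$ with $\mathbb{Z}/n\mathbb{Z}$, where $n=|G|$, and write $a_i\in\mathbb{Z}/n\mathbb{Z}$ for the element corresponding to $g_i$; then $\{g_1,\dots,g_m\}$ generating $G$ means precisely $\gcd(a_1,\dots,a_m,n)=1$, and what we must produce is a tuple of integers $r_1,\dots,r_{m-1}$ for which $a:=a_m+\sum_{i=1}^{m-1}r_ia_i$ is a unit modulo $n$ (equivalently, has order $n$, equivalently generates $\mathbb{Z}/n\mathbb{Z}$). The cases $m=1$ and $G$ trivial are immediate, so assume $m\ge 2$ and $n>1$. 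The key observation is that ``$a$ is a unit mod $n$'' is local at the primes of $n$: it holds iff $a\not\equiv 0\pmod p$ for every prime $p\mid n$, and $a\bmod p$ depends only on the residues $r_i\bmod p$. Hence it suffices to choose, for each prime $p\mid n$ independently, residues $r_1^{(p)},\dots,r_{m-1}^{(p)}\in\mathbb{Z}/p\mathbb{Z}$ making $a_m+\sum_i r_i^{(p)}a_i\not\equiv 0\pmod p$, and then to lift these to integers $r_1,\dots,r_{m-1}$ with $r_i\equiv r_i^{(p)}\pmod p$ for all $p\mid n$; this is possible by the Chinese Remainder Theorem, since the primes dividing $n$ are finitely many and pairwise distinct.

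To carry out the local step, fix a prime $p\mid n$ and consider the affine map $\phi_p\colon(\mathbb{Z}/p\mathbb{Z})^{m-1}\to\mathbb{Z}/p\mathbb{Z}$, $(r_i)\mapsto a_m+\sum_i r_ia_i\bmod p$. If $a_i\equiv 0\pmod p$ for every $i<m$, then $\phi_p$ is constant equal to $a_m\bmod p$; since $\gcd(a_1,\dots,a_m,n)=1$ not all of $a_1,\dots,a_m$ vanish mod $p$, so $a_m\not\equiv 0$ and any choice of residues works. Otherwise some $a_i$ with $i<m$ is nonzero mod $p$, so $\phi_p$ is a non-constant affine functional on the $\mathbb{F}_p$-vector space $\mathbb{F}_p^{m-1}$; it is therefore surjective, its zero fibre has size $p^{m-2}<p^{m-1}$, and we may pick $(r_i^{(p)})$ off it. Patching over all $p\mid n$ by the Chinese Remainder Theorem as above yields integers $r_i$ for which $a=a_m+\sum_i r_ia_i$ is a unit mod $n$, so $g_m\prod_{i=1}^{m-1}g_i^{r_i}$ has order $n=|G|$ and generates $G$.

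I do not anticipate any real difficulty: the only points needing care are the degenerate cases (a prime dividing some $a_i$, or $m=1$, or $n=1$) and the observation that the condition is genuinely local at the primes of $n$, which is what allows the prime-by-prime choices made over the various fields $\mathbb{F}_p$ to be glued into a single integer tuple $(r_i)$ via the Chinese Remainder Theorem. One may alternatively first reduce to squarefree $n$ by noting that a subset generates $G$ iff its image generates $G/\Phi(G)\cong\prod_{p\mid n}\mathbb{Z}/p\mathbb{Z}$ (Lemma~\ref{l:gaschutz} is available here, though the Frattini property alone suffices), after which the same $\mathbb{F}_p$-linear argument finishes; but the direct computation above is shorter.
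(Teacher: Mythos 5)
Your proof is correct, but it takes a genuinely different route from the paper. The paper argues by induction on $m$: it sets $N=\langle g_1\rangle$, applies the induction hypothesis to the cyclic quotient $G/N$, and then invokes Gasch\"utz's lemma (Lemma~\ref{l:gaschutz}, using $d(G)=1$) to lift the resulting generator of $G/N$ to a generator of $G$ of the prescribed shape, with the correction term absorbed as a power $g_1^{r_1}$. You instead work directly in $\mathbb{Z}/n\mathbb{Z}$: generation of $G$ by $\{g_1,\dots,g_m\}$ is $\gcd(a_1,\dots,a_m,n)=1$, the desired condition on $a=a_m+\sum_i r_ia_i$ is local at the primes $p\mid n$, at each such $p$ either all $a_i$ ($i<m$) vanish mod $p$ (forcing $a_m\not\equiv 0$) or the affine functional $(r_i)\mapsto a_m+\sum_i r_ia_i$ over $\mathbb{F}_p$ is nonconstant and hence misses $0$ on some tuple, and the local choices are glued by the Chinese Remainder Theorem. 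Your handling of the degenerate cases ($m=1$, $n=1$, all $a_i\equiv 0 \bmod p$ for $i<m$) is correct, and the CRT step is legitimate since the relevant primes are finitely many and distinct. What each approach buys: your argument is completely elementary and self-contained, avoiding Gasch\"utz's lemma altogether and making the arithmetic explicit; the paper's inductive argument is shorter to write given that Gasch\"utz's lemma is already a tool of the paper (it is reused in Lemma~\ref{diameter}), and its structure reflects the general lifting-of-generating-tuples philosophy rather than the specific arithmetic of $\mathbb{Z}/n\mathbb{Z}$. Your closing remark that one could first pass to $G/\Phi(G)\cong\prod_{p\mid n}\mathbb{Z}/p\mathbb{Z}$ is also a valid variant and is essentially a repackaging of the same local-to-global idea.
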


\begin{proof}
We prove the result by induction on $m$. If $m=1$ the result is trivial since $g_1$ generates $G$. 
We assume $m>1$. Let $N=\langle g_1\rangle$. Now $G/N$ is cyclic with generating set $\{Ng_2,\ldots,Ng_m\}$. By the induction hypothesis for $1<i<m$ there exist integers $r_i$ such that $G/N$ is generated by 
$Ng_m\prod_{i=2}^{m-1}g_i^{r_i}$. Since $G$ is cyclic, by Lemma \ref{l:gaschutz}, there exists $n=g_1^{r_1}\in N$ where $r_1$ is an integer such that $g_m\prod_{i=1}^{m-1}g_i^{r_i}$ generates $G$. This completes the induction. 
\end{proof}

We can now apply Lemma \ref{l:cyclicga} to prove the following result. 

\begin{lemma}
\label{primcyclic} Let $F$ be a free group of finite rank $d$ over $X=\{x_1,\ldots,x_d\}$ and let $c$ be a positive integer.
Let $\varphi:F \to \mathbb{Z}/c\mathbb{Z}$ be a surjective homomorphism. Then there exists some $\lambda \in \mathrm{Aut}(F)$ such that $\langle \varphi(\lambda(x_1))\rangle = \mathbb{Z}/c\mathbb{Z}$.
\end{lemma}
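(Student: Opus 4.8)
The plan is to use Lemma~\ref{l:cyclicga} to straighten out a generating tuple coming from $\varphi$, and then realise the straightening by an automorphism of $F$.

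First I would note that since $\varphi$ is surjective, the tuple $(\varphi(x_1),\ldots,\varphi(x_d))$ generates the cyclic group $\mathbb{Z}/c\mathbb{Z}$. By Lemma~\ref{l:cyclicga} (applied with $m=d$, $g_i=\varphi(x_i)$, after possibly relabelling so that the ``$g_m$'' of that lemma is $\varphi(x_1)$, or equivalently by reversing the order of the generators), there exist integers $r_2,\ldots,r_d$ such that the single element $\varphi(x_1)\prod_{i=2}^{d}\varphi(x_i)^{r_i}$ generates $\mathbb{Z}/c\mathbb{Z}$. The key observation is then that $\varphi(x_1)\prod_{i=2}^{d}\varphi(x_i)^{r_i}=\varphi\bigl(x_1\prod_{i=2}^{d}x_i^{r_i}\bigr)$, so it suffices to produce an automorphism $\lambda$ of $F$ with $\lambda(x_1)=x_1\prod_{i=2}^{d}x_i^{r_i}$.

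Such a $\lambda$ exists because $x_1\prod_{i=2}^{d}x_i^{r_i}$ is a primitive element of $F$: it is obtained from $x_1$ by a sequence of Nielsen transformations of the form $x_1\mapsto x_1 x_i^{\pm 1}$ (each of which is an automorphism fixing all $x_j$ with $j\neq 1$), so the composite of these elementary automorphisms is an automorphism $\lambda\in\mathrm{Aut}(F)$ sending $x_1$ to $x_1\prod_{i=2}^{d}x_i^{r_i}$. Then $\langle\varphi(\lambda(x_1))\rangle=\langle\varphi(x_1)\prod_{i=2}^{d}\varphi(x_i)^{r_i}\rangle=\mathbb{Z}/c\mathbb{Z}$, as required. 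In the edge case $d=1$ the statement is trivial with $\lambda=\mathrm{id}$, since surjectivity of $\varphi$ already forces $\langle\varphi(x_1)\rangle=\mathbb{Z}/c\mathbb{Z}$.

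I do not anticipate a serious obstacle here; the only mild care needed is the bookkeeping matching the indexing convention of Lemma~\ref{l:cyclicga} (which singles out the last generator $g_m$) against the desired conclusion (which singles out $x_1$), and spelling out that an elementary Nielsen substitution $x_1\mapsto x_1 x_i$ is genuinely an automorphism of $F$. One could alternatively invoke Gaschütz's lemma (Lemma~\ref{l:gaschutz}) directly, but routing through Lemma~\ref{l:cyclicga} keeps the argument self-contained and short.
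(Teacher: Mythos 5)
Your proposal is correct and follows essentially the same route as the paper: both invoke Lemma~\ref{l:cyclicga} to produce a single $\varphi$-image generating $\mathbb{Z}/c\mathbb{Z}$ and then realise the corresponding word as $\lambda(x_1)$ for some $\lambda\in\mathrm{Aut}(F)$. The only (immaterial) difference is that the paper takes the primitive word $x_d\prod_{i=1}^{d-1}x_i^{r_i}$ and justifies primitivity via hopficity of $F$, whereas you keep $x_1$ as the distinguished letter and exhibit the automorphism explicitly as a composite of elementary Nielsen maps $x_1\mapsto x_1x_i^{r_i}$.
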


\begin{proof}
Write $C= \mathbb{Z}/c\mathbb{Z}$. The case $d = 1$ is immediate, hence we assume that $d > 1$. 
For $1\leq i \leq d$, let $c_i=\phi(x_i)\in C$. As $\varphi: F\rightarrow C$ is surjective, $\{c_1,\ldots,c_d\}$ is a generating set for $C$. By Lemma \ref{l:cyclicga} for $1\leq i < d$ there exist integers $r_i$ such that $c_d\prod_{i=1}^{d-1}c_i^{r_i}$ generates $C$. Now since $\{x_1,\ldots,x_d\}$ is a basis of $F$ and $\{x_1,\ldots,x_{d-1},x_d\prod_{i=1}^{d-1}x_i^{r_i}\}$ is clearly a generating set for $F$, the latter set is a basis of $F$ (as $F$ is hopfian). Thus $x_d\prod_{i=1}^{d-1}x_i^{r_i}$ is a primitive word of $F$ and there exists  $\lambda \in \mathrm{Aut}(F)$ such that $\lambda(x_1)=x_d\prod_{i=1}^{d-1}x_i^{r_i}$. Now $$\varphi(\lambda(x_1))=\varphi\left(x_d\prod_{i=1}^{d-1}x_i^{r_i}\right)=\varphi(x_d)\prod_{i=1}^{d-1}\varphi(x_i)^{r_i}=c_d\prod_{i=1}^{d-1}c_i^{r_i}$$ and so  
$\varphi(\lambda(x_1))$ generates $C$. 
\end{proof}

\section{Systems of polynomial equations over the integers}\label{s:sys}

In this section, given a system of polynomial equations in several variables over the integers, we are interested in properties of the set of primes $p$ for which the system has a common solution modulo $p$. We first introduce some notation.  

 \begin{defn}
 Let $J=\{f_1,\ldots,f_m\} \subset  \mathbb{Z}[X_1,\ldots,X_n]$ where  $m,n\geq 1$. We define $S(J)$ to be the set of primes $p$ such that $f_1,\ldots,f_m$ have a common root modulo $p$. 
\end{defn}

The result below establishes that one can essentially reduce the original system of polynomials to a single polynomial in one variable. 

\begin{theorem}\label{t:jar}{\rm{\cite[Theorem 1.2]{Jarviniemi}}}
Let $J=\{f_1,\ldots,f_m\} \subset  \mathbb{Z}[X_1,\ldots,X_n]$ where  $m,n\geq 1$.  There is a polynomial $f\in \mathbb{Z}[X]$ such that $S(J)=S(f)$.
\end{theorem}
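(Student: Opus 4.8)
The plan is to show that, after discarding finitely many primes, $S(J)$ is a \emph{Frobenian} set of a very particular shape, and then to realise any such set, exactly, as some $S(f)$ with $f\in\mathbb Z[X]$.

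\emph{Step 1: a Galois description of $S(J)$ off a finite set.} I would first produce a finite Galois extension $L/\mathbb Q$ with $G=\mathrm{Gal}(L/\mathbb Q)$, subgroups $H_1,\dots,H_k\le G$, and a finite set of primes $E$, such that for every $p\notin E$ one has $p\in S(J)$ if and only if $\mathrm{Frob}_p$ is conjugate into some $H_i$. When the common solution set of $f_1,\dots,f_m$ is finite this is classical and essentially elementary: the $\overline{\mathbb Q}$-solutions fall into Galois orbits, the $i$-th orbit being the conjugates of a point whose coordinates generate a number field $K_i=L^{H_i}$ with $H_i=\mathrm{Gal}(L/K_i)$ (take $L$ containing the Galois closure of each $K_i$), and for all but finitely many $p$ the reduced system acquires an $\mathbb F_p$-solution from the $i$-th orbit exactly when $K_i$ has a prime of residue degree $1$ above $p$, i.e.\ exactly when $\mathrm{Frob}_p$ is conjugate into $H_i$. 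For a positive-dimensional solution set one reduces to this case by induction on dimension: stratify by the intersections of the geometrically irreducible components and apply the Lang--Weil estimates, using that a geometrically irreducible component defined over $\mathbb F_p$ carries an $\mathbb F_p$-point once $p$ is large, that a given Galois orbit of components contains one defined over $\mathbb F_p$ precisely under a Frobenius condition of the above form, and that solutions lying on lower-dimensional intersections are caught by the inductive hypothesis. (This structural fact is also a consequence of Ax's theorem on the first-order theory of finite fields, or of Serre's analysis of $N_X(p)$.) A more computational route is to eliminate the variables one at a time by resultants, which turns $S(J)$ into a finite union of sets $S(J')$ with each $J'$ a system of \emph{univariate} polynomials, for which the description above is transparent; the only nuisance there is the branching according to which leading coefficients vanish modulo $p$.

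\emph{Step 2: realising the Frobenian set as an $S(f_0)$.} For each $i$ pick a primitive element $\alpha_i\in\mathcal O_{K_i}$ of $K_i=L^{H_i}$ over $\mathbb Q$, let $m_i\in\mathbb Z[X]$ be its monic minimal polynomial, and set $f_0=\prod_{i=1}^k m_i$. Since $\mathbb F_p$ is a domain, $S(f_0)=\bigcup_i S(m_i)$, and by standard ramification theory, for every $p$ not dividing $\prod_i\mathrm{disc}(m_i)$ the polynomial $m_i$ has a root modulo $p$ precisely when $K_i$ has a prime of residue degree $1$ above $p$, i.e.\ precisely when $\mathrm{Frob}_p$ is conjugate into $H_i$. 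Hence $S(f_0)$ coincides with the Frobenian set of Step 1, and therefore with $S(J)$, outside a finite set of primes.

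\emph{Step 3: correcting the finitely many exceptional primes.} It remains to repair $S(f_0)$ on a finite set of primes, which is possible because $\{S(f):f\in\mathbb Z[X]\}$ is closed under the relevant operations: $S(gh)=S(g)\cup S(h)$; for a nonzero integer $c$, $S(c)=\{p:p\mid c\}$; for a finite set $T\subseteq\mathbb P$, $S\bigl((\textstyle\prod_{q\in T}q)X-1\bigr)=\mathbb P\setminus T$; and, for a single prime $q$, the substitution $X\mapsto qX+c$ satisfies $S\bigl(g(qX+c)\bigr)=S(g)\setminus\{q\}$ whenever some integer $c$ has $g(c)\not\equiv 0\pmod q$ (the only exceptional case, $\bar g$ vanishing on all of $\mathbb F_q$, occurs for finitely many small $q$ and is dealt with directly, e.g.\ by passing to a lower-degree polynomial with the same $S$ at $q$). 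Thus $S(f_0)$-type sets are closed under finite unions and under adjoining or deleting a finite set of primes; since for the finitely many exceptional $p$ the membership $p\in S(J)$ is decided by a finite search over $\mathbb F_p^n$, we modify $f_0$ to obtain $f\in\mathbb Z[X]$ with $S(f)=S(J)$.

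\emph{Main obstacle.} The heart of the matter is Step 1: controlling, uniformly in $p$, whether the reduction of an arbitrary system -- possibly of positive dimension and geometrically reducible -- has an $\mathbb F_p$-solution. This is where the Lang--Weil estimates (equivalently, Ax's theorem) genuinely enter, and it is the only non-elementary ingredient; the elimination-theory alternative trades it for a delicate but elementary analysis of vanishing leading coefficients, while the small-prime bookkeeping in Steps 2 and 3 is routine.
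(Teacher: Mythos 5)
The paper does not actually prove this statement: Theorem \ref{t:jar} is quoted verbatim from J\"arviniemi \cite[Theorem 1.2]{Jarviniemi} and used as a black box, so there is no internal proof to compare against. Judged on its own, your outline is correct and follows what is essentially the known route: Step 1 is Ax's theorem (the set of primes at which a system is solvable is Frobenian up to a finite set), proved in the standard way by stratifying into geometrically irreducible components and invoking Lang--Weil, with the zero-dimensional case handled by elementary Galois-orbit considerations; Step 2 is the classical Dedekind--Chebotarev fact that the Frobenian set ``$\mathrm{Frob}_p$ conjugate into $H_i$'' is, away from the discriminant, exactly $S(m_i)$ for the minimal polynomial of a primitive element of $L^{H_i}$; Step 3 patches finitely many primes using closure of $\{S(f)\}$ under products, constants, and the substitution $X\mapsto qX+c$. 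This is the same architecture as J\"arviniemi's argument (and of van den Dries's remark on Ax's theorem, cited in this paper as \cite{vDries}), so you are not missing an idea; the one genuinely non-elementary input, Ax/Lang--Weil, is correctly identified and legitimately invoked.

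The only soft spot is the exceptional case in Step 3, where the reduction of the current polynomial vanishes identically as a function on $\mathbb{F}_q$, so no shift $X\mapsto qX+c$ can delete $q$; your suggestion ``pass to a lower-degree polynomial with the same $S$ at $q$'' is too vague to count as a proof. A clean repair: since $f_0=\prod_i m_i$ is monic, this can only happen for $q\leq \deg f_0$, and for such a $q$ one may replace the offending $m_i$ by the minimal polynomial of $q\alpha_i$, which agrees with $S(m_i)$ at every prime $p\neq q$ (roots correspond under multiplication by $q$) and reduces to $X^{\deg m_i}$ modulo $q$, after which the substitution trick applies. With that detail filled in, the proposal is a complete and correct proof sketch of the cited theorem.
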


Let $J=\{f_1,\ldots,f_m\} \subset \mathbb{Z}[X_1,\ldots,X_n]$ where  $m,n\geq 1$, and let $f\in \mathbb{Z}[X]$ be such that $S(J)=S(f)$. The existence of such a polynomial $f$ is guaranteed by Theorem \ref{t:jar}.  
The aim of this section is to show that one can decide whether $f$ is a nonzero constant polynomial, and moreover whether $f=\pm1$ (see Corollary \ref{c:sys} below).  

 \begin{defn}
 Let $J=\{f_1,\ldots,f_m\} \subset \mathbb{C}[X_1,\ldots,X_n]$ where  $m,n\geq 1$. 
 We let $I=\langle J\rangle$ be the ideal of $\mathbb{C}[X_1,...,X_n]$ generated by $J$ and set $$V(I)=\{(x_1,\ldots,x_n)\in \mathbb{C}^n: g(x_1,\ldots,x_n)=0\  \textrm{for all} \ g \in I\}.$$ We also let $\mathcal{G}$ be the unique reduced Gr\"{o}bner basis of $I$ with respect to a fixed monomial order on $\mathbb{C}[X_1,...,X_n]$.   
\end{defn}

We record below three  results needed  for the proof of Corollary \ref{c:sys} below. 

\begin{proposition}\label{p:dries}{\rm{\cite[Proposition 2.7]{vDries}}}
Let $J=\{f_1,\ldots,f_m\} \subset  \mathbb{Z}[X_1,\ldots,X_n]$ where  $m,n\geq 1$.  Then $S(J)$ is finite if and only if $V(\langle J \rangle)= \emptyset$. 
\end{proposition}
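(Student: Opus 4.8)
The plan is to prove the two implications separately; in both, the bridge between emptiness of the complex variety $V(\langle J\rangle)$ and the size of $S(J)$ is Hilbert's Nullstellensatz together with a descent of the coefficient field from $\mathbb{C}$ down to $\overline{\mathbb{Q}}$ (resp. $\mathbb{Q}$).

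Suppose first that $V(\langle J\rangle)=\emptyset$. As $\mathbb{C}$ is algebraically closed, the weak Nullstellensatz gives $1\in\langle J\rangle$ inside $\mathbb{C}[X_1,\ldots,X_n]$, say $1=\sum_{i=1}^m g_if_i$ with $g_i\in\mathbb{C}[X_1,\ldots,X_n]$. Once the degrees of the $g_i$ are bounded, the existence of such $g_i$ is a solvable system of $\mathbb{Q}$-linear equations (the $f_i$ have rational coefficients), hence has a solution over $\mathbb{Q}$; clearing denominators produces $N\in\mathbb{Z}_{>0}$ and $h_1,\ldots,h_m\in\mathbb{Z}[X_1,\ldots,X_n]$ with $\sum_{i=1}^m h_if_i=N$. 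If $p\in S(J)$, then reducing this identity modulo $p$ and evaluating at a common root of $f_1,\ldots,f_m$ in $\mathbb{F}_p^n$ gives $N\equiv 0\pmod p$. Hence $S(J)\subseteq\{p\in\mathbb{P}:p\mid N\}$ is finite.

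For the converse, suppose $V(\langle J\rangle)\ne\emptyset$. Running the Nullstellensatz over $\overline{\mathbb{Q}}$ instead, $V(\langle J\rangle)$ must already have a point over $\overline{\mathbb{Q}}$ — otherwise $1\in\langle J\rangle$ over $\overline{\mathbb{Q}}$, hence over $\mathbb{C}$, a contradiction — so fix $\alpha=(\alpha_1,\ldots,\alpha_n)\in\overline{\mathbb{Q}}^n$ with $f_i(\alpha)=0$ for all $i$, and set $R=\mathbb{Z}[\alpha_1,\ldots,\alpha_n]$, a finitely generated domain of characteristic $0$ whose fraction field is a number field $K$. The key observation is that any ring homomorphism $\psi\colon R\to\mathbb{F}_p$ yields a prime in $S(J)$: the composite $\mathbb{Z}[X_1,\ldots,X_n]\to R\xrightarrow{\ \psi\ }\mathbb{F}_p$ with $X_i\mapsto\alpha_i$ sends every $f_i$ to $0$ (since $f_i(\alpha)=0$ in $R$), so $(\psi(\alpha_1),\ldots,\psi(\alpha_n))\in\mathbb{F}_p^n$ is a common root of $f_1,\ldots,f_m$ modulo $p$. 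To produce infinitely many such $p$, pick $N\in\mathbb{Z}_{>0}$ with $R\subseteq\mathcal{O}_K[1/N]$ (each $\alpha_i$ has bounded denominator in $K$). By Chebotarev's density theorem — equivalently, the classical fact that infinitely many rational primes admit a residue-degree-one prime divisor in a fixed number field — there are infinitely many primes $p\nmid N$ with a maximal ideal $\mathfrak{q}$ of $\mathcal{O}_K$ satisfying $\mathcal{O}_K/\mathfrak{q}\cong\mathbb{F}_p$. For such $p$, localizing at $N$ keeps $\mathfrak{q}\mathcal{O}_K[1/N]$ maximal with residue field $\mathbb{F}_p$, and the composite $R\hookrightarrow\mathcal{O}_K[1/N]\to\mathbb{F}_p$ is surjective because its image is a subring of $\mathbb{F}_p$ containing $1$. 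This gives the desired homomorphism $R\to\mathbb{F}_p$, so $S(J)$ is infinite.

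The forward implication is routine once the Nullstellensatz is invoked. The real work is in the converse, and the one genuinely non-elementary ingredient there is the existence of infinitely many rational primes with a residue-degree-one prime in the number field $K$, i.e. the appeal to Chebotarev's density theorem (or an equivalent classical splitting result); everything else amounts to bookkeeping with ideals and localizations of $\mathcal{O}_K$.
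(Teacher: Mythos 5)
Your proof is correct, but there is nothing internal to compare it with: the paper does not prove this proposition, it simply quotes it from van den Dries \cite{vDries}, so what you have written is a self-contained substitute for that citation. Both directions check out. Your forward direction (empty complex variety implies $S(J)$ finite) is essentially the same manipulation the authors perform later inside the proof of Corollary \ref{c:sys}(ii): from $1\in\langle J\rangle$ over $\mathbb{C}$ one passes to a rational solution of the coefficient-matching linear system (fix the degrees of the $g_i$ to those of one given complex solution, so the system is finite), clears denominators to get $\sum_i h_if_i=N$ with $h_i\in\mathbb{Z}[X_1,\ldots,X_n]$, and concludes that every $p\in S(J)$ divides $N$ -- exactly the role played by the integer $a$ in Corollary \ref{c:sys}. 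Your converse is the standard specialization argument (a $\overline{\mathbb{Q}}$-point via the Nullstellensatz over $\overline{\mathbb{Q}}$, the ring $R=\mathbb{Z}[\alpha_1,\ldots,\alpha_n]\subseteq\mathcal{O}_K[1/N]$, and reduction modulo a degree-one prime of $K$ not dividing $N$), and it is sound; note that surjectivity of $R\to\mathbb{F}_p$ is irrelevant, any ring homomorphism suffices. One observation on economy: the appeal to Chebotarev is heavier than needed and could be replaced by ingredients already in the paper -- write $K=\mathbb{Q}(\theta)$ with $\theta\in\mathcal{O}_K$ of minimal polynomial $g\in\mathbb{Z}[X]$; by Proposition \ref{p:fc} the nonconstant polynomial $g$ has a root modulo infinitely many primes, and for all but finitely many such $p$ (those not dividing the index of $\mathbb{Z}[\theta]$ in $\mathcal{O}_K$) Dedekind's factorization theorem yields a prime of $\mathcal{O}_K$ over $p$ with residue field $\mathbb{F}_p$, which is all your construction requires. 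This would make the whole proposition nearly self-contained relative to the rest of the paper, whereas your version imports a density theorem.
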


\begin{lemma}\label{l:sjf}
Let $J=\{f_1,\ldots,f_m\} \subset  \mathbb{Z}[X_1,\ldots,X_n]$ where  $m,n\geq 1$. Then it is decidable whether $V(\langle J \rangle)= \emptyset$. Consequently, it is also decidable whether $S(J)$ is finite. 
\end{lemma}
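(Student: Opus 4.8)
The plan is to reduce the emptiness of $V(\langle J\rangle)$ to an ideal-membership question that is answered algorithmically by Gr\"obner basis theory. Since $\mathbb{C}$ is algebraically closed, the weak Nullstellensatz gives that $V(\langle J\rangle)=\emptyset$ if and only if $1\in\langle J\rangle$, i.e. $\langle J\rangle=\mathbb{C}[X_1,\ldots,X_n]$. So the task becomes: decide whether $1$ lies in the ideal generated by the (explicitly given) polynomials $f_1,\ldots,f_m$.

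The first point to address is that this membership question, although posed over $\mathbb{C}$, can be tested over $\mathbb{Q}$. Indeed, if $1=\sum_{i=1}^m g_if_i$ with $g_i\in\mathbb{C}[X_1,\ldots,X_n]$, then bounding the degrees of the $g_i$ and comparing coefficients of monomials turns the equation into an inhomogeneous linear system whose coefficient matrix and right-hand side have entries in $\mathbb{Z}$; a system with rational data that has a solution over $\mathbb{C}$ also has one over $\mathbb{Q}$, so $1\in\langle J\rangle_{\mathbb{Q}}$. (Alternatively, one uses that $\mathbb{Q}[X_1,\ldots,X_n]\to\mathbb{C}[X_1,\ldots,X_n]$ is faithfully flat, so contracting the unit ideal is again the unit ideal.) Conversely $\langle J\rangle_{\mathbb{Q}}\subseteq\langle J\rangle_{\mathbb{C}}$. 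Hence $V(\langle J\rangle)=\emptyset$ if and only if $1\in\langle J\rangle_{\mathbb{Q}}$, and more generally the reduced Gr\"obner basis $\mathcal{G}$ from the definition (a priori over $\mathbb{C}$) coincides with the reduced Gr\"obner basis of $\langle J\rangle_{\mathbb{Q}}$ for the same monomial order, since Buchberger's algorithm performs only rational arithmetic on the input coefficients.

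Now I would invoke Buchberger's algorithm: starting from the finite set $J$, it terminates in finitely many steps and outputs the unique reduced Gr\"obner basis $\mathcal{G}$ of $\langle J\rangle_{\mathbb{Q}}$ with respect to the fixed monomial order. Ideal membership in $\langle J\rangle_{\mathbb{Q}}$ is then decided by multivariate division (reduction) modulo $\mathcal{G}$; in particular $1\in\langle J\rangle_{\mathbb{Q}}$ if and only if $\mathcal{G}=\{1\}$. This proves that it is decidable whether $V(\langle J\rangle)=\emptyset$. Finally, the ``consequently'' clause is immediate from Proposition \ref{p:dries}, which asserts that $S(J)$ is finite precisely when $V(\langle J\rangle)=\emptyset$; as the latter is decidable, so is the former. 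The only step requiring care, rather than a genuine obstacle, is the descent from $\mathbb{C}$ to $\mathbb{Q}$; the remainder is a direct appeal to standard effective commutative algebra.
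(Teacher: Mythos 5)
Your argument is correct and follows essentially the same route as the paper: compute the reduced Gr\"obner basis $\mathcal{G}$ of $\langle J\rangle$ by Buchberger's algorithm, use the Nullstellensatz to conclude that $V(\langle J\rangle)=\emptyset$ if and only if $\mathcal{G}=\{1\}$, and then apply Proposition \ref{p:dries} for the statement about $S(J)$. The only difference is that you spell out the (standard, and correct) descent from $\mathbb{C}$ to $\mathbb{Q}$ guaranteeing that the Gr\"obner basis computation is effective on integer input, a point the paper leaves implicit in its citation of Buchberger.
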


\begin{proof}
Fix a monomial order  on $\mathbb{C}[X_1,...,X_n]$. There is an algorithm to compute the (unique) reduced Gr\"{o}bner basis $\mathcal{G}$ of $\langle J\rangle$ (see \cite{Buchberger}). Since $\langle \mathcal{G}\rangle=\langle J\rangle$, Hilbert's Nullstellensatz implies that  $V(\langle J\rangle)=\emptyset$ if and only if $\mathcal{G}=\{1\}$. Thus it is decidable whether $V(\langle J\rangle)=\emptyset$, and Proposition \ref{p:dries} yields that is decidable whether $S(J)$ is finite.  
\end{proof}

\begin{proposition}\label{p:fc}
Let $f\in \mathbb{Z}[X]$. Then $S(f)$ is finite if and only if $f$ is a nonzero constant polynomial. 
\end{proposition}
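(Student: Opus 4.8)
The plan is to reduce the statement to an elementary fact about univariate polynomials over $\mathbb{C}$ by invoking Proposition~\ref{p:dries}. Apply that proposition to the singleton system $J=\{f\}$, so that $m=n=1$: it gives that $S(f)=S(\{f\})$ is finite if and only if $V(\langle f\rangle)=\emptyset$, where $\langle f\rangle$ denotes the ideal of $\mathbb{C}[X]$ generated by $f$. Hence it suffices to prove that $V(\langle f\rangle)=\emptyset$ precisely when $f$ is a nonzero constant polynomial.

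I would establish this equivalence case by case. If $f=0$, then $\langle f\rangle=\{0\}$, so $V(\langle f\rangle)=\mathbb{C}\neq\emptyset$, consistently with $f$ not being a nonzero constant. If $f$ is a nonzero constant, then $f$ is a unit of $\mathbb{C}[X]$, so $\langle f\rangle=\mathbb{C}[X]$ contains $1$, and therefore $V(\langle f\rangle)=\emptyset$. Finally, if $\deg f\geq 1$, the Fundamental Theorem of Algebra provides a root $\alpha\in\mathbb{C}$ of $f$; since every element of $\langle f\rangle$ is a $\mathbb{C}[X]$-multiple of $f$, it vanishes at $\alpha$, so $\alpha\in V(\langle f\rangle)$ and $V(\langle f\rangle)\neq\emptyset$. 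Combining the three cases, $V(\langle f\rangle)=\emptyset$ holds if and only if $f$ is a nonzero constant, and the proposition follows from Proposition~\ref{p:dries}.

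Once Proposition~\ref{p:dries} is in hand this is essentially immediate, and I do not anticipate a genuine obstacle; the only points requiring a moment's care are isolating the zero polynomial (which is exactly why the word ``nonzero'' is needed) and appealing to the Fundamental Theorem of Algebra to ensure that a positive-degree polynomial over $\mathbb{C}$ has a root. One could instead give a self-contained Euclid-style argument avoiding Proposition~\ref{p:dries}: after substituting $X\mapsto f(0)X$ one may assume $f$ has constant term $1$, and then evaluating $f$ at a large multiple of the product of any putative finite list of primes yields a value congruent to $1$ modulo each of them but of absolute value greater than $1$, which forces a new prime into $S(f)$. I would present the shorter route above.
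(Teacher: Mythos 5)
Your argument is correct, but it takes a genuinely different route from the paper. You deduce Proposition~\ref{p:fc} from Proposition~\ref{p:dries} applied to the singleton $J=\{f\}$, reducing everything to the elementary equivalence $V(\langle f\rangle)=\emptyset$ if and only if $f$ is a nonzero constant, which you settle via the Fundamental Theorem of Algebra and the fact that nonzero constants are units of $\mathbb{C}[X]$. This is logically sound and creates no circularity, since Proposition~\ref{p:dries} is an external citation that the paper invokes anyway; note, however, that the direction you actually need (if $V(\langle f\rangle)\neq\emptyset$ then $S(f)$ is infinite) is the deep half of van den Dries's result. The paper instead gives a self-contained elementary proof: for nonconstant $f$ with constant term $a_0\neq 0$ it forms $h_t(X)=1+\sum_{i\geq 1}a_ia_0^{i-1}t^iX^i$, observes that any prime divisor $p$ of a value $h_t(n)\notin\{-1,0,1\}$ lies in $S(f)$ (since $f(a_0tn)=a_0h_t(n)$) and is coprime to $t$ (since $h_t(n)\equiv 1 \bmod t$), and runs a Euclid-style induction with $t$ equal to the product of the primes already found, producing infinitely many primes in $S(f)$ -- essentially Schur's classical theorem on prime divisors of polynomial values; the cases $f=0$ and $a_0=0$ give $S(f)=\mathbb{P}$ directly. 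So your proof buys brevity at the cost of leaning on nontrivial cited machinery, while the paper's version keeps this proposition elementary and independent of Proposition~\ref{p:dries}. Your sketched ``Euclid-style'' alternative is essentially the paper's argument, though as stated it overlooks nonconstant $f$ with $f(0)=0$ (where every prime lies in $S(f)$) and the normalisation to constant term $1$ needs the factor-out-$a_0$ device rather than a bare substitution.
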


\begin{proof}
Clearly if $f$ is a nonzero constant polynomial then $S(f)$ is finite. 
Also if $f=0$ then $S(f)=\mathbb{P}$. 
Suppose now that $f$ is not a constant polynomial. Write $f(X)=\sum_{i=0}^n a_i X^{i}$. If $a_0=0$, then $f(p)\equiv 0 \mod p$ for every prime $p$, and so $S(f)=\mathbb{P}$. 
We can therefore assume that $a_0\neq 0$. For any nonzero $t\in \mathbb{Z}$, let 
\begin{eqnarray*}
g_t(X) & = & f(a_0tX)\\
& = & \sum_{i=0}^n a_i(a_0tX)^{i}\\
& = & a_0 \left[1+\sum_{i=1}^n a_ia_0^{i-1}t^{i}X^{i}\right]
\end{eqnarray*}
and consider 
$$h_t(X)= 1+\sum_{i=1}^n a_ia_0^{i-1}t^{i}X^{i}.$$
Since $h_t(X)$ can take the values $-1$, $0$ and $1$ only at finitely many points, there exists a prime $p$ such that $h_t(n)\equiv 0 \mod p$ for some integer $n$.  Note that $h_t(n)\equiv 1 \mod t$, and so ${\rm gcd}(t,p)=1$.
Also, since $f(a_0tn)=a_0h_t(n)$, then $f(a_0tn)\equiv 0 \mod p$ and $a_0tn$ is a root  of $f$ modulo $p$. 
Starting with $t=1$, we get a prime $p_1$ such that $f$ has a root modulo $p_1$ (and ${\rm gcd}(t,p_1)=1$). Then, setting $t=p_1$, we get a prime $p_2$ such that $f$ has a root modulo $p_2$ and ${\rm gcd}(p_1,p_2)=1$. In particular $p_1$ and $p_2$ are distinct. Suppose  we have obtained a sequence of distinct primes $p_1,\ldots,p_m$ such that $f$ has a root modulo $p_i$ for $1\leq i \leq m$.  Setting  $t=\prod_{i=1}^m p_i$, we get a prime $p_{m+1}$ such that $f$ has a root modulo $p_{m+1}$ and ${\rm gcd}(\prod_{i=1}^m p_i,p_{m+1})=1$. In particular, $p_1,\ldots,p_{m+1}$ are distinct primes. In this way, we get an infinite sequence  $(p_i)$  of primes such that  $f$ has a root modulo $p_i$ for every $i$. 
\end{proof}

We can now derive the result aforementioned above. 

\begin{corollary}\label{c:sys}
Let $J=\{f_1,\ldots,f_m\} \subset  \mathbb{Z}[X_1,\ldots,X_n]$ where  $m,n\geq 1$. Let $f\in \mathbb{Z}[X]$ be such that  $S(J)=S(f)$. The following assertions hold.  
\begin{enumerate}[(i)]
\item It is decidable whether $f$ is a nonzero constant polynomial. 
\item It is decidable whether $f=\pm1$.
\end{enumerate}
\end{corollary}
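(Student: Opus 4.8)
Both parts should follow by combining the decidability of finiteness of $S(J)$ (Lemma \ref{l:sjf}) with the characterization of finiteness of $S(f)$ for a univariate $f$ (Proposition \ref{p:fc}), together with a small amount of extra information extracted when $S(J)$ is in fact finite.

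For part (i), the equivalences $S(J)=S(f)$, ``$S(f)$ finite $\iff$ $f$ is a nonzero constant'' (Proposition \ref{p:fc}), and the decidability of ``$S(J)$ finite'' (Lemma \ref{l:sjf}) chain together immediately: $f$ is a nonzero constant polynomial if and only if $S(f)=S(J)$ is finite, and the latter is decidable. So part (i) is essentially a one-line deduction from the results already in hand; I would state it as such.

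For part (ii), the issue is to distinguish $f=\pm1$ from $f$ a nonzero constant of absolute value $\geq 2$, purely in terms of the sets $S(\cdot)$ (since $f$ itself is not given explicitly — only its defining property $S(J)=S(f)$). Note that if $f=c$ is a nonzero constant, then $S(f)$ is exactly the (finite) set of primes dividing $c$; in particular $S(f)=\emptyset$ precisely when $c=\pm1$. So the task reduces to deciding, under the hypothesis that $S(J)$ is finite (which by part (i) we can test), whether $S(J)=\emptyset$, i.e.\ whether the system $f_1,\ldots,f_m$ has a common root modulo $p$ for \emph{no} prime $p$. The plan is: first run the algorithm of part (i); if $f$ is not a nonzero constant, output ``$f\neq\pm1$''; otherwise we know $S(J)=S(f)$ is finite and $f$ is a nonzero constant $c$, so $S(J)=\{p : p \mid c\}$, and we need only decide whether this set is empty.

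To decide whether the finite set $S(J)$ is empty, I would produce an explicit finite superset of it and test each candidate prime directly. One clean route: by Proposition \ref{p:dries}, $S(J)$ finite means $V(\langle J\rangle)=\emptyset$, so by Hilbert's Nullstellensatz $1 \in \langle J\rangle$ over $\mathbb{Q}$ (equivalently over $\mathbb{C}$); using the computed Gr\"obner basis as in Lemma \ref{l:sjf}, one can effectively find polynomials $g_1,\ldots,g_m\in\mathbb{Q}[X_1,\ldots,X_n]$ with $\sum g_i f_i = 1$, clear denominators to get $\sum \tilde g_i f_i = N$ for an explicit nonzero integer $N$, and conclude that any $p\in S(J)$ divides $N$. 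Then $S(J)$ is empty if and only if none of the (finitely many, explicitly computable) prime divisors of $N$ lies in $S(J)$; and for each fixed prime $p$, checking whether $f_1,\ldots,f_m$ have a common root modulo $p$ is a finite computation (search over $(\mathbb{Z}/p\mathbb{Z})^n$, or a Gr\"obner basis computation over the field $\mathbb{Z}/p\mathbb{Z}$). This yields $f=\pm1 \iff S(J)=\emptyset$, which is therefore decidable.

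**Main obstacle.** The only genuinely nontrivial point is part (ii): converting the abstract finiteness statement into an \emph{effective} finite list of candidate primes. Once one observes that $f$ constant forces $S(f)$ to be the set of prime divisors of $f$ and that $f=\pm1 \iff S(J)=\emptyset$, the remaining work is to bound $S(J)$ effectively, for which extracting a Nullstellensatz certificate $\sum \tilde g_i f_i = N$ from the Gr\"obner basis computation (already invoked in Lemma \ref{l:sjf}) is the natural mechanism; one should be a little careful that the certificate is over $\mathbb{Q}$ and that clearing denominators gives a bona fide integer $N$ whose prime divisors control $S(J)$.
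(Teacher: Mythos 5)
Your proposal is correct and follows essentially the same route as the paper: part (i) is the same chain of Lemma \ref{l:sjf} and Proposition \ref{p:fc}, and part (ii) reduces to deciding whether $S(J)=\emptyset$ by producing an integer certificate $\sum_{i}\ell_i f_i = a$ with $\ell_i \in \mathbb{Z}[X_1,\ldots,X_n]$, noting every prime in $S(J)$ divides $a$, and checking the finitely many prime divisors of $a$ directly. The only (harmless) difference is how effectivity of the certificate is justified: you extract it from the Gr\"obner basis computation over $\mathbb{Q}$ and clear denominators, whereas the paper passes to a linear system with integer coefficients and invokes the effective Nullstellensatz of Masser--W\"ustholz to compute $a$ and the $\ell_i$.
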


\begin{proof}
The first part follows from Lemma \ref{l:sjf} and Proposition \ref{p:fc}. 
We now consider the second part. If $f$ is not a nonzero constant polynomial, then $f\neq\pm 1$ and we are done. We can therefore suppose that $f$ is a nonzero constant polynomial and we would like to decide whether $f=\pm1$, i.e whether $S(J)$ is empty. Since $S(J)=S(f)$ is finite, it follows from Proposition \ref{p:dries} that $V(\langle J\rangle)=\emptyset$ and by Hilbert's Nullstellensatz $\langle J\rangle=\mathbb{C}[X_1,\ldots,X_n]$. In particular, there exist polynomials $g_1,\ldots,g_m\in \mathbb{C}[X_1,\ldots,X_n]$ such that $\sum_{i=1}^m f_ig_i=1$. 
The equation $\sum_{i=1}^m f_ig_i=1$ amounts to a system of linear equations with integer coefficients (since the $m$ polynomials $f_i$ are in $\mathbb{Z}[X_1,...,X_n]$). We know that this system has a (complex) solution, so it must also have a rational solution. Scaling up, there exist an integer $a$ and polynomials $\ell_1,\ldots,\ell_m \in \mathbb{Z}[X_1,...,X_n]$ such $\sum_{i=1}^m f_i\ell_i=a$. 

 By \cite[Chapter 4, Theorem IV]{MW}, which gives an Effective  Hilbert's Nullstellensatz,  $a$ and $\ell_1,\ldots,\ell_m$ are computable.  Note that if $a\in \pm\{1\}$ then $S(J)$ is empty and $f=\pm1$. 
 
 Suppose that $a\not \in \{\pm1\}$. Now if $f\neq \pm 1$, that is, if $S(J)$ is nonempty, then every prime  $p\in S(J)$ must divide $a$. So we just need to check whether the polynomials in $J$ have a common root modulo the primes dividing $a$. Then $f\neq \pm 1$ if and only if $S(J)$ is nonempty if and only if the polynomials in $J$ have a common root modulo a prime dividing $a$. This latter condition is decidable.  
\end{proof}

\section{A reduction lemma and $p$-hypersolvable groups}\label{s:rl}

 In this section we prove the reduction lemma due to Martino Garonzi, namely Lemma \ref{lem:dense} below.
Given a prime $p$, we then introduce the concept of a $p$-hypersolvable group and give some of its properties that we will use later.

We first consider the following definition generalising the notion of denseness in pro-$\mathbf{V}$ topologies on a group $G$.

\begin{defn}\label{d:gd}
Let $G$ be a group and let $H$ be a subgroup of $G$. For a class $\mathcal{C}$ of finite groups, let $P_{\mathcal{C}}(H,G)$ be the following property: $HN=G$ whenever $N$ is a normal subgroup of $G$ with $G/N\in \mathcal{C}$. 
\end{defn}

We  characterise below the finite primitive supersolvable groups. Recall that a finite group $L$ is primitive if it has a maximal subgroup $M$ such that ${\rm Core}_L(M)$ is trivial.

\begin{lemma}\label{l:supprim}
A finite group is  primitive supersolvable if and only if it is a semidirect product $\mathbb{Z}/p\mathbb{Z}\rtimes C$ where $p$ is a prime and $C\leq \mathrm{Aut}(\mathbb{Z}/p\mathbb{Z})\cong (\mathbb{Z}/p\mathbb{Z})^*$.
\end{lemma}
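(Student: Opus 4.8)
The plan is to analyze the structure of a finite primitive supersolvable group via its unique minimal normal subgroup. First I would recall that in any finite solvable primitive group $L$ with core-free maximal subgroup $M$, the socle $N = \soc(L)$ is the unique minimal normal subgroup, it is an elementary abelian $p$-group for some prime $p$, it is self-centralizing (i.e. $C_L(N) = N$), and $L = N \rtimes M$ with $M$ acting faithfully and irreducibly on $N$ — these are the standard facts about primitive solvable groups (Galois' theorem / the structure of affine primitive groups). I would cite or briefly justify that supersolvable implies solvable, so these facts apply.

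The key step is then to use supersolvability to force $N$ to be cyclic of order $p$. Since $L$ is supersolvable, every chief factor of $L$ is cyclic of prime order; in particular the chief factor $N/1$ is cyclic of order $p$, so $N \cong \mathbb{Z}/p\mathbb{Z}$. Consequently $M$ embeds into $\Aut(N) \cong (\mathbb{Z}/p\mathbb{Z})^*$, and writing $C := M$ gives $L = \mathbb{Z}/p\mathbb{Z} \rtimes C$ with $C \leq \Aut(\mathbb{Z}/p\mathbb{Z}) \cong (\mathbb{Z}/p\mathbb{Z})^*$, as required for the forward direction.

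For the converse, suppose $L = \mathbb{Z}/p\mathbb{Z} \rtimes C$ with $C \leq (\mathbb{Z}/p\mathbb{Z})^*$. Such $L$ is supersolvable: the normal series $1 \lhd \mathbb{Z}/p\mathbb{Z} \lhd L$ has factors $\mathbb{Z}/p\mathbb{Z}$ and $C$, the latter cyclic since $(\mathbb{Z}/p\mathbb{Z})^*$ is cyclic, and one can refine $C$ through a chain of subgroups normal in $L$ (any subgroup of the cyclic group $C$ is normalized by $L$, since $L$ acts on the abelian quotient $L/(\mathbb{Z}/p\mathbb{Z}) \cong C$ by conjugation which is trivial — $C$ being abelian — so every subgroup of $C$ lifts to a normal subgroup of $L$), giving a chief series with cyclic factors. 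For primitivity, take $M = C$: this is a maximal subgroup (its index is the prime $p$), and $\Core_L(C) = \bigcap_{u \in L} u^{-1}Cu$ is trivial because any normal subgroup of $L$ contained in $C$ centralizes the normal complement $\mathbb{Z}/p\mathbb{Z}$ (the two are intersecting trivially and one is normal), hence lies in $C_L(\mathbb{Z}/p\mathbb{Z})$; but $C$ acts faithfully on $\mathbb{Z}/p\mathbb{Z}$ by hypothesis (being a subgroup of $\Aut(\mathbb{Z}/p\mathbb{Z})$), so $C_C(\mathbb{Z}/p\mathbb{Z}) = 1$ and the core is trivial.

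The main obstacle is marshalling the standard theory of primitive solvable groups cleanly — specifically establishing that the socle is the unique minimal normal subgroup, elementary abelian, and self-centralizing — without a long digression; depending on what the paper wants to assume, this may need either a citation to a standard reference or a short self-contained argument using the fact that a core-free maximal subgroup $M$ forces any minimal normal subgroup $N$ to satisfy $L = NM$ and $N \cap M = 1$, and that in a solvable group minimal normal subgroups are elementary abelian. Everything else is routine verification.
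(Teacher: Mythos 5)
Your proposal is correct and follows essentially the same route as the paper: the forward direction uses the standard facts about solvable primitive groups (unique minimal normal subgroup $N$, $C_G(N)=N$, core-free maximal complement), with supersolvability forcing $N\cong\mathbb{Z}/p\mathbb{Z}$ and faithfulness giving $C\leq\mathrm{Aut}(\mathbb{Z}/p\mathbb{Z})$; the converse exhibits $C$ as a core-free maximal subgroup, exactly as in the paper (which cites Baer for the primitive-group facts and treats the $C=1$ case separately). Your converse is in fact slightly more detailed than the paper's, which simply declares supersolvability "clear," but the substance is the same.
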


\begin{proof}
We use some elementary facts about finite primitive groups, for more details see \cite{Baer}.
Let $G$ be a finite  primitive supersolvable group. Since $G$ is solvable, a minimal normal subgroup $N$ of $G$ is (elementary) abelian. Since $G$ is primitive and $N$ is abelian, $N$ is in fact the unique minimal normal subgroup of $G$ and $C_G(N)=N$. Since $G$ is primitive, $G$ has a core free maximal subgroup $C$. Therefore $CN=G$. Also $C\cap C_G(N)=1$ and so $N\cap C=1$.  In particular $G=N\rtimes C$. Finally, since $G$ is supersolvable and $N$ is a minimal normal subgroup of $G$, we obtain $N\cong \mathbb{Z}/p\mathbb{Z}$ for some prime  $p$.  As $C_G(N)=N$, we obtain $C\leq {\rm Aut}(\mathbb{Z}/p\mathbb{Z})\cong (\mathbb{Z}/p\mathbb{Z})^*$.

Conversely, let $G=\mathbb{Z}/p\mathbb{Z}\rtimes C$ where $p$ is prime and $C\leq \mathrm{Aut}(\mathbb{Z}/p\mathbb{Z})\cong (\mathbb{Z}/p\mathbb{Z})^*$. It is clear that $G$ is supersolvable.  If $C=1$, then $G$ is primitive (with maximal subgroup 1). Suppose that $C\neq 1$. As $G$ is supersolvable and $C$ has prime index in $G$, $C$ is a maximal subgroup of $G$. Moreover $\mathrm{Core}_G(C)=1$, as $\mathbb{Z}/p\mathbb{Z}$ is the unique nontrivial normal subgroup of $G$. Hence $G$ is primitive. 
\end{proof}

We now introduce a notion used in Lemma \ref{lem:dense} below.

\begin{defn}\label{d:cqp}
Let $G$ be a group and let $p$ be a prime. We say that a subgroup $H$ of $G$ \emph{satisfies condition $Q_p(H,G)$} if $HN=G$ for all $N\unlhd G$ with $F/N\cong  \mathbb{Z}/p\mathbb{Z}\rtimes C$ for some  $C\leq {\rm Aut}(\mathbb{Z}/p\mathbb{Z})$. 
\end{defn}

\begin{lemma}\label{lem:dense}
Let $G$ be a nontrivial group and let $H$ be a subgroup of $G$.  Let $\mathcal{C}$ be a nontrivial class of finite groups closed under taking quotients. Let $\mathcal{C}_{\rm prim}=\{L \in \mathcal{C}: L \ \textrm{primitive}\}$.  Let $\mathbf{V}$ be a pseudovariety of finite groups. The following assertions hold.
\begin{enumerate}[(i)]
\item $P_{\mathcal{C}}(H,G)$ holds if and only if $P_{\mathcal{C}_{\rm prim}}(H,G)$ holds. 
\item $H$ is $\mathbf{V}$-dense if and only if $P_{\mathbf{V}_{\rm prim}}(H,G)$ holds. 
\item $H$ is $\mathbf{Su}$-dense if and only if $H$ satisfies condition $Q_p(H,G)$ for every prime $p$. 
\end{enumerate}
\end{lemma}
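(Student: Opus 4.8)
The plan is to prove the three assertions in order, since (iii) will follow by combining (i), (ii) and the classification of primitive supersolvable groups in Lemma \ref{l:supprim}.

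\textbf{Part (i).} One direction is trivial: since $\mathcal{C}_{\rm prim} \subseteq \mathcal{C}$, if $P_{\mathcal{C}}(H,G)$ holds then $P_{\mathcal{C}_{\rm prim}}(H,G)$ holds. For the converse, assume $P_{\mathcal{C}_{\rm prim}}(H,G)$ and suppose $N \unlhd G$ with $G/N \in \mathcal{C}$; we must show $HN = G$. Working in the quotient $\overline{G} = G/N$, it suffices to show $\overline{H} = \overline{G}$, where $\overline{H}$ is the image of $H$. Suppose not; then $\overline{H}$ is contained in some maximal subgroup $M$ of $\overline{G}$. Put $K = \operatorname{Core}_{\overline{G}}(M)$, so that $L := \overline{G}/K$ is a primitive group, being the quotient of $\overline{G} = G/N$ it lies in $\mathcal{C}$ (closure under quotients), hence $L \in \mathcal{C}_{\rm prim}$. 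Pulling back to $G$, let $N' \unlhd G$ be the preimage of $K$ under $G \to \overline{G}$; then $G/N' \cong L \in \mathcal{C}_{\rm prim}$, so by hypothesis $HN' = G$. But the image of $H$ in $\overline{G}/K \cong L$ lies in $M/K$, a proper subgroup of $L$, contradicting $HN' = G$. Hence $\overline{H} = \overline{G}$ and $P_{\mathcal{C}}(H,G)$ holds.

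\textbf{Part (ii).} Recall from the preliminaries that $H$ is $\mathbf{V}$-dense in $G$ if and only if $HN = G$ for every $N \unlhd G$ with $G/N \in \mathbf{V}$, i.e.\ if and only if $P_{\mathbf{V}}(H,G)$ holds. Since $\mathbf{V}$ is a pseudovariety it is in particular closed under taking quotients, and $\mathbf{V}$ is nontrivial precisely when $G$ admits a proper normal subgroup with quotient in $\mathbf{V}$ (otherwise the statement is vacuous on the relevant side); more carefully, if $\mathbf{V}$ is trivial then both sides hold trivially as $G$ is its only quotient in the relevant sense, and the equivalence is immediate. Assuming $\mathbf{V}$ nontrivial, apply part (i) with $\mathcal{C} = \mathbf{V}$: $P_{\mathbf{V}}(H,G)$ holds if and only if $P_{\mathbf{V}_{\rm prim}}(H,G)$ holds. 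Combining, $H$ is $\mathbf{V}$-dense if and only if $P_{\mathbf{V}_{\rm prim}}(H,G)$ holds.

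\textbf{Part (iii).} Apply part (ii) with $\mathbf{V} = \mathbf{Su}$: $H$ is $\mathbf{Su}$-dense if and only if $P_{\mathbf{Su}_{\rm prim}}(H,G)$ holds, i.e.\ $HN = G$ for every $N \unlhd G$ with $G/N$ a finite primitive supersolvable group. By Lemma \ref{l:supprim}, the finite primitive supersolvable groups are exactly the semidirect products $\mathbb{Z}/p\mathbb{Z} \rtimes C$ with $p$ prime and $C \leq \operatorname{Aut}(\mathbb{Z}/p\mathbb{Z}) \cong (\mathbb{Z}/p\mathbb{Z})^*$. Therefore $P_{\mathbf{Su}_{\rm prim}}(H,G)$ is precisely the conjunction over all primes $p$ of the condition that $HN = G$ for all $N \unlhd G$ with $G/N \cong \mathbb{Z}/p\mathbb{Z} \rtimes C$ for some $C \leq \operatorname{Aut}(\mathbb{Z}/p\mathbb{Z})$, which is exactly condition $Q_p(H,G)$. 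Hence $H$ is $\mathbf{Su}$-dense if and only if $Q_p(H,G)$ holds for every prime $p$. (Note: in Definition \ref{d:cqp} the symbol $F$ should read $G$.)

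The only genuine obstacle is the converse of part (i): one must be careful that passing from an arbitrary normal subgroup $N$ with $G/N \in \mathcal{C}$ to a primitive quotient really does produce a member of $\mathcal{C}_{\rm prim}$ and that non-density is inherited by this primitive quotient. Both points rest on closure of $\mathcal{C}$ under quotients and on the elementary fact that a proper subgroup sits inside a maximal one whose core gives a primitive quotient; everything else is bookkeeping.
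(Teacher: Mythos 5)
Your proposal is correct and follows essentially the same route as the paper: for (i) you pass to a primitive quotient via the core of a maximal subgroup (phrased inside $G/N$ and pulled back, which is the paper's argument in $G$ itself via the correspondence theorem), and (ii), (iii) are then deduced from (i), the denseness characterisation, Lemma \ref{l:supprim} and Definition \ref{d:cqp} exactly as in the paper.
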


\begin{proof}
We  consider the first part. It is clear that if $P_{\mathcal{C}}(H,G)$ holds then $P_{\mathcal{C}_{\rm prim}}(H,G)$ also holds. Suppose now that $P_{\mathcal{C}_{\rm prim}}(H,G)$ holds.  Assume for a contradiction that $P_{\mathcal{C}}(H,G)$ does not hold.  Then there exists $N\unlhd G$ with $G/N\in \mathcal{C}$ such that $HN\neq G$. Let $M$ be a maximal subgroup of $G$ containing $HN$ and set $L=\textrm{Core}_G(M)$. Note that $L$ contains $N$.  Since $\mathcal{C}$ is closed under taking quotients and $G/N\in \mathcal{C}$, we obtain that  $G/L\cong \frac{G/N}{L/N}$ belongs to $\mathcal{C}$.  Now $M/L$ is a maximal subgroup of $G/L$ and ${\rm Core}_{G/L}(M/L)=1$, hence $G/L\in \mathcal{C}_{{\rm prim}}$. Since  $P_{\mathcal{C}_{\rm prim}}(H,G)$ holds,  $HL=G$. As $L\leq M$, this implies $HM=G$, a contradiction since $HM=M$. 

The second part is now immediate and the final part follows from Lemma \ref{l:supprim} and Definition \ref{d:cqp}. 
\end{proof}

Given a prime $p$, we now introduce the family of $p$-hypersolvable groups. 

\begin{defn}
Fix a prime $p$. For a positive integer $n$, let $V_n$ denote the elementary abelian group of order $p^n$. A  finite group $G$ is said to be  \emph{$p$-hypersolvable} if $G$ is isomorphic to  $V_n\rtimes C$ for some positive integer $n$ and some  $C\leq \mathrm{Aut}(\mathbb{Z}/p\mathbb{Z})\cong (\mathbb{Z}/p\mathbb{Z})^*$  acting diagonally on $V_n$.  We let $x$ be a generator for $C$ and identify $x$ with an integer $1\le \alpha\le p-1$. In this way, $V_n \rtimes C=V_n \rtimes_{\alpha}C$ via the action $v^x=\alpha v$ for all $v\in V_n$, endowed with the product $(v,x^a)(w,x^b) = (v + w^{(x^{a})},x^{a+b}) = (v + \alpha^{a}w,x^{a+b})$. (Note that $\alpha=1$ if and only if $C=1$.)
\end{defn}

Clearly the $p$-hypersolvable groups are supersolvable. The $p$-hypersolvable groups in fact have a number of other useful properties, which we collect in the next lemma. 
\begin{lemma}\label{lem:facts}
Fix a prime $p$ and a positive integer $n$. Let $G=V_n\rtimes_{\alpha} C$ be a $p$-hypersolvable group. Set $c=|C|$.  Let $g,h\in G$ and write $g,h$ uniquely in the form $g=vx^{c_1}$ and $h=wx^{c_2}$, where $v,w\in V_n$, and $0\le c_1,c_2\le c-1$. Then 
\begin{enumerate}[\upshape(i)]
\item We have $g^{-1}=(-\alpha^{-c_1}v)x^{-c_1}$.
\item We have $[g,h]=(1-\alpha^{c_2})v+(\alpha^{c_1}-1)w$. 
\item If $g\neq 1$ then ${\rm ord}(g)=p$ if $c_1=0$, and ${\rm ord}(g)=c/{\rm gcd}(c,c_1)$ otherwise.
\item If  $c_1\neq 0$ then there exists $h\in G$ such that $hgh^{-1}=x^{c_1}\in C$.
\item If $u \in V_n$ then $gug^{-1}=\alpha^{c_1}u$. 
\end{enumerate}
\end{lemma}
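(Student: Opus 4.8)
The plan is to prove all five parts by direct computation in the group $G = V_n \rtimes_\alpha C$, using the explicit multiplication rule $(v,x^a)(w,x^b) = (v + \alpha^a w, x^{a+b})$ given in the definition, together with the fact that $C$ acts on $V_n$ by scalar multiplication by $\alpha$. I would write $g = v x^{c_1}$, $h = w x^{c_2}$ throughout, identifying $V_n$ with its image in $G$ and $C = \langle x \rangle$ with its image, so that $v x^{c_1}$ really means $(v, x^{c_1})$.

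First, for (i): from $(v, x^{c_1})(u, x^{-c_1}) = (v + \alpha^{c_1} u, 1)$, solving $v + \alpha^{c_1} u = 0$ gives $u = -\alpha^{-c_1} v$, so $g^{-1} = (-\alpha^{-c_1} v) x^{-c_1}$, where $\alpha^{-c_1}$ makes sense as an element of $(\mathbb{Z}/p\mathbb{Z})^*$. For (v): compute $g u g^{-1} = (v x^{c_1})(u, 1)(-\alpha^{-c_1}v)x^{-c_1}$; since $u \in V_n$ and the $V_n$-part is abelian, the middle factor contributes $(v + \alpha^{c_1} u)x^{c_1}$, and multiplying by $g^{-1}$ collapses the $x$-part to the identity and (after the $\alpha^{c_1}$-twist on the $V_n$ slot and cancellation of $v$) leaves $\alpha^{c_1} u$. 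For (ii): $[g,h] = ghg^{-1}h^{-1}$; using (i) and (v) and the fact that all $V_n$-components add, one expands $gh = (v + \alpha^{c_1} w)x^{c_1+c_2}$ and then multiplies by $g^{-1}h^{-1}$; the $x$-parts must cancel to give an element of $V_n$, and tracking the $V_n$-parts with their scalar twists yields $(1 - \alpha^{c_2})v + (\alpha^{c_1} - 1)w$. I would be slightly careful to confirm the sign/order conventions match the paper's convention $[g,h] = ghg^{-1}h^{-1}$.

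For (iii): if $c_1 = 0$ then $g = v \in V_n \setminus \{0\}$ (since $g \neq 1$), so ${\rm ord}(g) = p$. If $c_1 \neq 0$, compute $g^k = (v + \alpha^{c_1}v + \cdots + \alpha^{(k-1)c_1}v)\, x^{kc_1} = \big(\sum_{j=0}^{k-1}\alpha^{jc_1}\big)v \; x^{kc_1}$; this is trivial iff both $kc_1 \equiv 0 \pmod c$ and the geometric-sum coefficient kills $v$. The first condition forces $c \mid kc_1$, i.e. $k$ a multiple of $c/\gcd(c,c_1)$; I would check that at that value of $k$ the scalar sum is automatically $0$ in $\mathbb{Z}/p\mathbb{Z}$ — indeed when $\alpha^{c_1} \neq 1$ the sum $\sum_{j=0}^{k-1}\alpha^{jc_1} = (\alpha^{kc_1}-1)/(\alpha^{c_1}-1) = 0$ since $\alpha^{kc_1} = 1$, and when $\alpha^{c_1} = 1$ we'd have $\alpha = 1$ hence $C = 1$, contradicting $c_1 \neq 0$ — so ${\rm ord}(g) = c/\gcd(c,c_1)$. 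For (iv): since $c_1 \neq 0$, I want $h$ with $hgh^{-1} = x^{c_1}$; trying $h = u \in V_n$, compute $ugu^{-1} = u (v x^{c_1})(-u) = (u + v + \alpha^{c_1}(-u))x^{c_1} = (v + (1 - \alpha^{c_1})u)x^{c_1}$, so it suffices to solve $(1 - \alpha^{c_1})u = -v$, which is possible since $\alpha^{c_1} \neq 1$ (as $c_1 \neq 0$ and $\alpha$ has order $c$... more precisely $\alpha^{c_1} = 1$ would again force $c \mid c_1$, impossible for $0 < c_1 \le c-1$, unless $\alpha$ itself has order properly dividing $c$ — here I should be careful: $C = \langle x \rangle$ has order $c$ and $x$ corresponds to $\alpha$, so $\alpha$ has multiplicative order exactly $c$, hence $\alpha^{c_1} \neq 1$); thus $1 - \alpha^{c_1}$ is invertible in $\mathbb{Z}/p\mathbb{Z}$ and $u = -(1-\alpha^{c_1})^{-1}v$ works.

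None of these steps is genuinely hard; the main thing to get right is bookkeeping of the semidirect-product conventions — in particular which factor gets twisted by $\alpha$, the direction of the action, and the subtle point in (iii)–(iv) that the image of $x$ in $(\mathbb{Z}/p\mathbb{Z})^*$ has multiplicative order exactly $c = |C|$, so that $\alpha^{c_1} - 1$ is a unit precisely when $c_1 \not\equiv 0 \pmod c$. I expect the only place a reader might stumble is verifying that the geometric sum vanishes at the claimed order in (iii), so I would spell that geometric-series identity out explicitly rather than leaving it implicit.
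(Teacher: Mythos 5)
Your proposal is correct and follows essentially the same route as the paper's proof: direct computation with the semidirect-product multiplication rule for (i), (ii) and (v), the geometric-sum formula for $g^k$ together with the order of $x^{c_1}$ for (iii), and conjugation by the element $v/(\alpha^{c_1}-1)\in V_n$ for (iv). The only slip is the aside in (iii) claiming $\alpha^{c_1}=1$ would force $\alpha=1$; the correct justification, which you yourself give in (iv), is that $\alpha$ has multiplicative order exactly $c$ in $(\mathbb{Z}/p\mathbb{Z})^*$, so $\alpha^{c_1}\neq 1$ whenever $0<c_1\le c-1$.
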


\begin{proof}
Part (i) is clear. Indeed $(-\alpha^{-c_1}v)x^{-c_1}\in G$,
$$g\cdot((-\alpha^{-c_1}v)x^{-c_1})=(vx^{c_1})\cdot((-\alpha^{-c_1}v)x^{-c_1})=(v+\alpha^{c_1}(-\alpha^{-c_1}v))(x^{c_1}x^{-c_1})=1.$$
We now consider part (ii). Using part (i), we have
\begin{eqnarray*}
[g,h] & = & ghg^{-1}h^{-1}\\
& = & (vx^{c_1})\cdot(wx^{c_2})\cdot((-\alpha^{-c_1}v)x^{-c_1})\cdot((-\alpha^{-c_2}w)x^{-c_2})\\
& = &  ((v+\alpha^{c_1}w)x^{c_1+c_2}) \cdot ((-\alpha^{-c_1}v-\alpha^{-c_1-c_2}w)x^{-c_1-c_2})  \\
& =  &(v+\alpha^{c_1}w)+\alpha^{c_1+c_2}(-\alpha^{-c_1}v-\alpha^{-c_1-c_2}w)\\
& =& (1-\alpha^{c_2})v+(\alpha^{c_1}-1)w. 
\end{eqnarray*}

 We consider part (iii). Given a positive integer $i$, arguing by induction yields  $$g^{i}=\sum_{j=0}^{i-1}\alpha^{jc_1}vx^{ic_1}.$$ 
 Suppose that $g\in V_n$, so that $c_1=0$. As $g$ is nontrivial and $V_n$ is an elementary abelian $p$-group, $g$ has order $p$. 
 Suppose now that $g \not \in V$. Then $c_1 \neq 0$ and for $i\geq 1$
 $$g^{i}=\frac{\alpha^{ic_1}-1}{\alpha^{c_1}-1}vx^{ic_1}.$$
  As elements of $(\mathbb{Z}/p\mathbb{Z})^*$, $x$  and $\alpha$ have order $c$, and so $x^{c_1}$ and $\alpha^{c_1}$ have order $c/\mathrm{gcd}(c,c_1)$. It now follows that $g$ has order $c/\mathrm{gcd}(c,c_1)$. 
  We now consider part (iv). Setting $h:=v/(\alpha^{c_1}-1)$, one checks that $hgh^{-1}=x^{c_1}\in C.$
  We finally consider part (v). By part (i), $g^{-1}=(-\alpha^{-c_1}v)x^{-c_1}$ and so
  \begin{eqnarray*}
  gug^{-1} & = & (vx^{c_1})\cdot u \cdot  ((-\alpha^{-c_1}v)x^{-c_1})  \\
  & = &((v+\alpha^{c_1}u)x^{c_1})\cdot  ((-\alpha^{-c_1}v)x^{-c_1})\\
  & = & v+\alpha^{c_1}u+\alpha^{c_1}(-\alpha^{-c_1})v\\
  & = &  \alpha^{c_1}u.
  \end{eqnarray*}
  \end{proof}

We characterise  below generating sets in (non elementary abelian) $p$-hypersolvable groups.

\begin{lemma}\label{lem:genpsup}
Fix a prime $p$ and an integer $d\geq 2$. Let $G:=V_{d-1}\rtimes C$ be an associated $p$-hypersolvable group with $C\neq 1$ and let $c=|C|$. For an element $g\in G$, write $g$ uniquely in the form $g=v(g)x^{c(g)}$, where $v(g)\in V_d-1$ and $0\le c(g)\le c-1$.
Then a subset $Z:=\{g_1,\hdots,g_d\}$ of $G$ of cardinality $d$ is a generating set for $G$ if and only if each of the following holds:
\begin{enumerate}[\upshape(i)]
    \item $\lcm\left\{\dfrac{c}{\mathrm{gcd}(c,c(g_i))}\text{ : }1\leq i \leq d, \ c(g_i)\neq 0\right\}=c$.
    \item $\{[g_i,g_j]\text{ : }g_i,g_j\in Z\}$ spans $V_d-1$.
\end{enumerate}
\end{lemma}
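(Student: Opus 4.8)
The plan is to analyze the quotient map $G \to C$ and the normal subgroup $V := V_{d-1}$ separately, since $G = V \rtimes C$ and $C$ is cyclic. Writing $\overline{g}$ for the image of $g$ in $C \cong G/V$, note that $\overline{g_i} = x^{c(g_i)}$, and by Lemma \ref{lem:facts}(iii) the order of $x^{c(g_i)}$ in $C$ is $c/\mathrm{gcd}(c,c(g_i))$ (interpreting $c(g_i) = 0$ as giving the trivial element). So condition (i) is precisely the statement that $\{\overline{g_1}, \dots, \overline{g_d}\}$ generates $C$: a set of elements in a finite cyclic group of order $c$ generates it if and only if the lcm of their orders equals $c$. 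Thus (i) $\iff$ $\langle Z \rangle V = G$.

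For the forward direction, suppose $\langle Z\rangle = G$. Then projecting to $G/V = C$ gives (i) immediately. For (ii): let $U$ be the subspace of $V$ spanned by $\{[g_i,g_j] : g_i, g_j \in Z\}$. Since $[g_i,g_j] \in V$ always, and by Lemma \ref{lem:facts}(ii) we have $[g_i,g_j] = (1-\alpha^{c(g_j)})v(g_i) + (\alpha^{c(g_i)}-1)v(g_j)$, one checks $U$ is normalized by each $g_i$ (using Lemma \ref{lem:facts}(v): conjugation by $g_i$ acts on $V$ as the scalar $\alpha^{c(g_i)}$, which fixes every subspace), hence $U \unlhd G$. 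Then $G/U = \langle Z \rangle U / U$ is generated by the images $g_iU$, and in $G/U$ all the commutators $[g_iU, g_jU]$ are trivial, so $G/U$ is abelian. But $G/U = (V/U) \rtimes C$, and an abelian semidirect product with $C \neq 1$ acting by the scalar $\alpha \neq 1$ forces $V/U = 0$, i.e. $U = V$, giving (ii).

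For the converse, assume (i) and (ii); I must show $\langle Z \rangle = G$. Let $K = \langle Z \rangle$. By (i), $KV = G$, so it suffices to show $V \leq K$. The subgroup $K \cap V$ is normal in $K$ (as $V \unlhd G$) and hence normalized by all of $G = KV$ (since $V$ centralizes $V$... more precisely $V$ is abelian and $K$ normalizes $K\cap V$, so $G$ normalizes it); in particular $K \cap V$ is a $C$-invariant, i.e. $\langle \alpha\rangle$-invariant, subspace of $V$. Now $[g_i, g_j] \in K \cap V$ for all $i,j$, so by (ii) the span $V$ of these commutators lies in $K \cap V$, whence $V \leq K$ and $K = G$.

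The main obstacle — though it is more of a bookkeeping point than a deep difficulty — is justifying cleanly that the relevant subgroups ($U$ in the forward direction, $K \cap V$ in the converse) are $C$-invariant subspaces of $V$ rather than merely subgroups, so that the semidirect-product structure can be exploited; this is where Lemma \ref{lem:facts}(v) (conjugation acts as a scalar, hence preserves all subspaces) does the work, and one must be slightly careful that $C \neq 1$ (so $\alpha \neq 1$) is genuinely used to kill $V/U$ in the abelian-quotient argument. A secondary point is making sure the degenerate cases $c(g_i) = 0$ are handled correctly in the correspondence between condition (i) and "the images generate $C$", which is immediate once one adopts the convention that an empty lcm or a generator of order $1$ is treated appropriately.
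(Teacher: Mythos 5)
Your proposal is correct and takes essentially the same route as the paper: condition (i) via the projection onto $C$, and condition (ii) via the fact that conjugation acts on $V_{d-1}$ by scalars (Lemma \ref{lem:facts}(v)), so that the span of the commutators is normal in $G$; your abelian-quotient argument for the forward direction is just a repackaging of the paper's observation that $V_{d-1}=[G,G]$ is generated by the normal closure of the pairwise commutators of any generating set. The converse, which the paper dismisses as clear, is spelled out correctly (and the $C$-invariance detour for $K\cap V$ is harmless but unnecessary, since any subgroup of the elementary abelian group $V_{d-1}$ is automatically a subspace).
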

\begin{proof}
If (i) and (ii) hold, then it is clear that $Z$ generates $G$.
So assume that $Z$ is a generating set for $G$. Then the reduction of $Z$ modulo $V_{d-1}$ is a generating set for the cyclic group $C$. It follows that $Z$ satisfies (i).
Next, we prove (ii). Since $V_{d-1}=[G,G]$ and $Z$ generates $G$, it follows easily from the commutator identities and induction that $V_{d-1}$ is generated by the normal closure of the set $\{[g_i,g_j]\text{ : }g_i, g_j\in Z\}$. 
Since every subgroup of $V_{d-1}$ is normal in $G$, part (ii) follows.
\end{proof}

Finally, using Gaschutz's lemma, we determine the minimal number of generators of a $p$-hypersolvable group. 

\begin{lemma}
\label{diameter}
Fix a prime $p$ and a positive integer $n$.  Let $G:=V_{n}\rtimes C$ be an associated $p$-hypersolvable group.
Then
$$d(G) = \left\{
\begin{array}{ll}
n&\mbox{ if $C=1$}\\
n+1&\mbox{ otherwise.}
\end{array}
\right.$$
\end{lemma}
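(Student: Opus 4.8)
The statement to prove is Lemma~\ref{diameter}, computing $d(G)$ for a $p$-hypersolvable group $G = V_n \rtimes C$. The case $C = 1$ is immediate: then $G = V_n$ is elementary abelian of rank $n$, so $d(G) = n$. For the remaining case $C \neq 1$, I would argue the upper bound and lower bound separately.

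\textbf{Lower bound.} When $C \neq 1$, I claim $d(G) \geq n+1$. Since $G$ has the cyclic quotient $G/V_n \cong C \neq 1$ and the elementary abelian quotient $G/[G,G]$ (note $[G,G] \leq V_n$ by Lemma~\ref{lem:facts}(ii), with equality unless $\alpha = 1$, but here $\alpha \neq 1$ so $[G,G] = V_n$ provided $n \geq 1$ — actually one should check: by Lemma~\ref{lem:facts}(ii) the commutators $[g,h]$ with $c(g), c(h)$ ranging span all of $V_n$ since $\alpha \neq 1$, so $[G,G] = V_n$). Hence $G^{\mathrm{ab}} = G/V_n \cong C$ is cyclic of order $c$. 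This only gives $d(G) \geq 1$, which is too weak. Instead, I would use that any generating set, reduced modulo $V_n$, must generate $C$, and that the commutators of a generating $d$-set must span $V_n$ by Lemma~\ref{lem:genpsup}(ii) (valid once $n = d-1$, i.e.\ $d = n+1$; more precisely, Lemma~\ref{lem:genpsup} is stated for a $d$-element generating set, so one needs the right bookkeeping). The clean route: suppose $Z = \{g_1, \ldots, g_m\}$ generates $G$ with $m = d(G)$. By Lemma~\ref{lem:genpsup}(ii) applied with the appropriate indexing (or directly via the commutator argument in its proof), $V_n$ is generated by $\{[g_i, g_j] : g_i, g_j \in Z\}$; but by Lemma~\ref{lem:facts}(ii) each $[g_i,g_j] = (1 - \alpha^{c(g_j)})v(g_i) + (\alpha^{c(g_i)} - 1)v(g_j)$ lies in the span of $\{v(g_1), \ldots, v(g_m)\}$, so these $m$ vectors span $V_n$, giving $m \geq n$. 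Moreover at least one $g_i$ must have $c(g_i) \neq 0$ (else $Z \subseteq V_n$ and $Z$ does not generate the non-abelian $G$); I would then show $m = n$ is impossible by observing that if $\{v(g_1), \ldots, v(g_n)\}$ is forced to be a basis of $V_n$ and some $c(g_i) \neq 0$, then $[g_i, g_j]$ for $j \neq i$ equals $(1-\alpha^{c(g_j)})v(g_i) + (\alpha^{c(g_i)}-1)v(g_j)$, and the span of all such commutators over $i,j$ has dimension at most $n-1$ (it misses the "diagonal direction"), contradicting Lemma~\ref{lem:genpsup}(ii). This parity-of-dimension count is the step I expect to be the main obstacle, so I would instead favor the slicker argument below.

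\textbf{Slicker lower bound via a quotient.} Consider the quotient $\bar G = G / [V_n, V_n]\cdot(\text{a hyperplane})$ — more usefully, map $G \twoheadrightarrow (\mathbb{Z}/p\mathbb{Z})^{n} \rtimes C \to$ a smaller $p$-hypersolvable group. Actually the cleanest: take the quotient $G \to W \rtimes C$ where $W = V_n / U$ for a $C$-invariant hyperplane $U$ (every subspace is $C$-invariant since $C$ acts as scalars), giving a $p$-hypersolvable group $G_1 = V_1 \rtimes C$ of order $pc$. Since $C \neq 1$, $G_1$ is non-abelian, hence $d(G_1) \geq 2$. For the general bound I would instead consider $G^{\mathrm{ab}} \times (\text{Frattini-type data})$: use the formula $d(G) = \max(d(G/\Phi(G)), \ldots)$. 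Concretely, $\Phi(G) \supseteq [G,G]^{\text{suitable}}$; one computes $G/\Phi(G)$. Here $\Phi(C)$ is trivial or not depending on whether $c$ is prime, so this is delicate. The genuinely clean approach is the Gaschütz-based one the paper signposts.

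\textbf{Upper bound via Gaschütz.} To show $d(G) \leq n+1$ when $C \neq 1$: exhibit an explicit generating set of size $n+1$, e.g.\ $\{x, e_1 x, e_2, \ldots, e_n\}$ where $e_1, \ldots, e_n$ is a basis of $V_n$ and $x$ generates $C$ — one checks $[x, e_1 x] $ and $[x, e_i]$ recover a basis of $V_n$ (using $\alpha \neq 1$) and the $C$-part is generated by $x$, so by Lemma~\ref{lem:genpsup} this set generates $G$. Combined with the lower bound this gives $d(G) = n+1$.

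\textbf{Combining.} So the structure is: (1) dispose of $C=1$; (2) for $C \neq 1$, give the explicit $(n+1)$-generating set and invoke Lemma~\ref{lem:genpsup} for the upper bound; (3) for the lower bound, apply Lemma~\ref{lem:genpsup}(ii) and Lemma~\ref{lem:facts}(ii) to force $n$ generators spanning $V_n$, then rule out $d(G) = n$ by the dimension-of-commutator-span computation, or alternatively by noting $d(G) \geq d(G^{\mathrm{ab}})$ plus $d(G) > d(G/V_n)$ since $G$ is non-abelian and $V_n \leq \Phi(G) \cdot [G,G]$... — I would go with the direct commutator-span argument since it is self-contained given the earlier lemmas. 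The main obstacle is the lower bound $d(G) \geq n+1$ rather than merely $\geq n$: showing that $n$ generators cannot suffice requires observing that $n$ vectors that span $V_n$ together with their pairwise $\alpha$-twisted commutators still only span an $(n-1)$-dimensional or at best $n$-dimensional space in a way incompatible with Lemma~\ref{lem:genpsup}, which I would pin down by a short explicit linear-algebra computation with the formula from Lemma~\ref{lem:facts}(ii).
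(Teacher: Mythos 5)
Your final plan is correct in outline but follows a genuinely different route from the paper's. For the lower bound the paper argues by induction on $n$: assuming $d(G)=n$ with $C\neq 1$, it applies Gasch\"utz's lemma (Lemma \ref{l:gaschutz}) to the quotient $G/U$ with $U=\langle v_n\rangle$ (normal because $C$ acts by scalars) to lift the generating set $\{Uv_1,\ldots,Uv_{n-1},Ux\}$ to a generating set $\{u_1v_1,\ldots,u_{n-1}v_{n-1},u_nx\}$ of $G$, and then gets a contradiction by an order count, since $G$ would equal $N\rtimes\langle u_nx\rangle$ with $N=\langle u_iv_i : 1\le i\le n-1\rangle\le V_n$ of order at most $p^{n-1}c<p^nc$. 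Your route instead uses Lemma \ref{lem:facts}(ii) together with the ``only if'' argument from the proof of Lemma \ref{lem:genpsup} (which, as you correctly note, applies to a generating set of any size even though the lemma is stated only for $d$-element sets): the commutators of any generating set $\{g_1,\ldots,g_m\}$, $g_i=v_ix^{c_i}$, must span $V_n=[G,G]$, and since they lie in $\mathrm{span}(v_1,\ldots,v_m)$ this forces $m\ge n$; the step you flagged as the main obstacle, ruling out $m=n$, does go through and more cleanly than you feared: when $\{v_1,\ldots,v_n\}$ is a basis, the linear functional $\phi$ with $\phi(v_i)=\alpha^{c_i}-1$ is nonzero (some $c_i\neq 0$ because the images of the $g_i$ generate $C\neq 1$) and annihilates every $[g_i,g_j]=(1-\alpha^{c_j})v_i+(\alpha^{c_i}-1)v_j$, so the commutators lie in the hyperplane $\ker\phi$ of dimension $n-1$ and cannot span $V_n$. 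So your claim should be stated as ``dimension at most $n-1$'' (not ``at best $n$-dimensional''), and the intermediate ``slicker lower bound via a quotient/Frattini'' paragraph should be dropped, since the quotient $V_1\rtimes C$ only yields $d(G)\ge 2$ and that detour is not needed. Comparing the two: the paper's argument is short, avoids any linear algebra, and is a standard Gasch\"utz-lifting-plus-order-count induction; yours is induction-free, fully explicit given Lemmas \ref{lem:facts} and \ref{lem:genpsup}, and in fact exhibits the obstruction (the functional $\phi$) rather than just counting orders.
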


\begin{proof}
The case $C=1$ is well known, hence we assume that $C\neq 1$. We have $C \cong \mathbb{Z}/c\mathbb{Z}$ for some  divisor $c>1$ of $p-1$.  Let $x$ be a generator of $C$.  It is clear that $d(G)\leq n+1$. We prove that $d(G)=n+1$ by induction on $n$. 
 The case $n=1$ is trivial since $V_1\rtimes C$ is not cyclic. 
Suppose $n>1$. Let $\langle v_1,\ldots, v_n\rangle$ be a basis of $V_n$ and let $U=\langle v_n\rangle$. Note that $U\lhd G$.  By the induction hypothesis, we have $d(G/U)=n$. Thus since 
$d(G/U)\leq d(G)\leq n+1$, we may assume (in pursuit of a contradiction) that $d(G)=n=d(G/U)$. Since $\{Uv_1,\ldots, Uv_{n-1},Ux\}$ is a generating set of $G/U$, Lemma \ref{l:gaschutz} yields that for $1\leq i\leq n$, there exist $u_i\in U$  such that $G=\langle u_1v_1,\ldots, u_{n-1}v_{n-1},u_nx \rangle$.  Let $N=\langle u_iv_i: 1\leq i \leq n-1\rangle$. Then $N\leq V_n$ and so $N\lhd G$. Since $u_nx$ has order $c$, we obtain 
$$G=N\rtimes \langle u_nx\rangle.$$ This is a contradiction, as $|G|=p^nc$ but  $N\rtimes \langle u_nx\rangle$ has order at  most $p^{n-1}c$. Hence $d(G)=n+1$. 
\end{proof}

\section{A technical lemma}\label{s:tl}
The aim of this section is to prove the technical lemma below. Let $F$ be a free group of finite rank $d>1$ over $X=\{x_1,\ldots,x_d\}$.
For $i = 1,\ldots,d-1$, let $\tau_i \in S_d$ denote the transposition $(i\; d)$, and let $\tau_d$ be the identity. We also denote by $\tau_i$ the automorphism of $F$ which switches the letters $x_i$ and $x_d$ and fixes the remaining ones.
For $i = 1,\ldots,d-1$, let $\iota_i \in \mathbb{Z}^{d-1}$ have $1$ in the $i$th coordinate and $0$ everywhere else. We may also view $\iota_i$ as an element of $V_{d-1}$ if convenient.

\begin{lemma} 
\label{factor}
Let $F$ be a free group of finite rank $d>1$ over $X=\{x_1,\ldots,x_d\}$ and let $G$ be a nonabelian $p$-hypersolvable group. 
Let $\varphi:F \to G$ be a surjective homomorphism. Then there exist some $t \in \{ 1,\ldots,d\}$, $C \leq \mathbb{Z}/(p-1)\mathbb{Z}$, $\alpha \in \{ 2,\ldots,p-1\}$ and surjective homomorphisms $\psi: F \to V_{d-1} \rtimes_{\alpha} C$ and $\theta: V_{d-1} \rtimes_{\alpha} C \to G$ such that the diagram
$$\xymatrix{
F \ar[rr]^{\varphi} \ar[d]_{\tau_t}&&G\\
F \ar[rr]_{\psi}&&V_{d-1} \rtimes_{\alpha} C \ar[u]_{\theta}
}$$
commutes and, for $i = 1,\ldots,d-1$, $\psi(x_i) = (\iota_i,c_i)$ for some $c_i \in C$.
\end{lemma}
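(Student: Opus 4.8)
The plan is to work with the composite homomorphism $\varphi \colon F \to G = V_{d-1} \rtimes_{\alpha_0} C_0$ and first diagonalize the abelian part. Since $G$ is nonabelian $p$-hypersolvable, $[G,G] = V_{d-1}$ and $G/[G,G] \cong C_0$ is cyclic. Composing $\varphi$ with the quotient map $G \to C_0$ gives a surjective homomorphism $F \to C_0$, so by Lemma~\ref{primcyclic} there is some $\mu \in \mathrm{Aut}(F)$ (in fact, after inspecting the proof, $\mu$ can be taken to be a product of a transposition $\tau_t$ with an automorphism fixing $x_1$ — but we only need the structure loosely) such that $\varphi(\mu(x_1))$ generates the quotient $C_0$. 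Actually, since we are allowed a whole transposition $\tau_t$ on the left and an arbitrary surjection $\psi$ on the bottom, the cleanest route is: choose $t$ so that $\varphi(x_t)$ has image generating $C_0$ modulo $[G,G]$ — this is possible because $\{\varphi(x_1),\ldots,\varphi(x_d)\}$ generates $G$, hence its reduction generates the cyclic group $C_0$, and by Lemma~\ref{l:cyclicga} (or directly, since one of the generators of a cyclic group together with adjustments suffices) we can arrange, after possibly precomposing with an elementary Nielsen automorphism folded into $\psi$, that one coordinate does the job; the transposition $\tau_t$ then moves this coordinate to position $d$.

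Next I would construct $\psi$ and $\theta$ explicitly. Set $C = C_0$ and $\alpha = \alpha_0$, so the target $V_{d-1} \rtimes_\alpha C$ of $\psi$ is abstractly a $p$-hypersolvable group of the "full" dimension $d-1$. Define $\psi$ on the basis by declaring $\psi(x_i) = (\iota_i, c_i)$ for $i = 1,\ldots,d-1$, where $c_i \in C$ is the image of $\varphi(\tau_t(x_i))$ under $G \to C$, and $\psi(x_d) = (0, c_d)$ with $c_d$ chosen so that $\sum$-consistency with $\varphi \circ \tau_t$ on the cyclic quotient holds. This $\psi$ is a well-defined homomorphism from the free group $F$ since $F$ is free on $X$. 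It is surjective: condition (i) of Lemma~\ref{lem:genpsup} holds because we arranged the $c_i$'s to generate $C$ (using the transposition to put a good generator where needed), and condition (ii) holds because the commutators $[\psi(x_i),\psi(x_j)] = (1-\alpha^{c_j})\iota_i + (\alpha^{c_i}-1)\iota_j$ (by Lemma~\ref{lem:facts}(ii)) span $V_{d-1}$ — indeed for any index $i$ with $c_i \ne 0$ one gets $\iota_j$ for all $j \ne i$ up to scalars, and then $\iota_i$ as well; one must check the edge case where only one $c_i$ is nonzero, which still works since then that single $c_i$ generates $C$ and the $d-1 \geq $ (number of) commutators $[\psi(x_i),\psi(x_j)]$ for $j \neq i$ give all $\iota_j$, $j\neq i$, with nonzero coefficient, plus $[\psi(x_j),\psi(x_i)]$ gives the remaining one; if \emph{no} $c_i$ is nonzero among $i \le d-1$ then $c_d$ alone must generate $C$ and we need the commutators with $x_d$, handled symmetrically.

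Then I would define $\theta \colon V_{d-1} \rtimes_\alpha C \to G = V_{d-1} \rtimes_{\alpha} C_0$ to be the unique homomorphism with $\theta(\psi(x_i)) = \varphi(\tau_t(x_i))$ for all $i$; this is well-defined as a map out of $V_{d-1}\rtimes_\alpha C$ precisely because $V_{d-1}\rtimes_\alpha C$ is generated by $\{\psi(x_i)\}$ and one checks the relations — the only relations in the $p$-hypersolvable group are $p$-torsion of the $V$-part, the order relation on $C$, and the twisted-commutation $x v x^{-1} = \alpha v$, all of which are respected by the images $\varphi(\tau_t(x_i))$ since $G$ has the same $\alpha$ and its $C_0$-part has order dividing $|C|$; more carefully, $\theta$ is the map induced on the quotient because $\psi$ is surjective and $\ker\psi \subseteq \ker(\varphi\circ\tau_t)$, which must be verified by comparing what $\psi$ and $\varphi\circ\tau_t$ do on $F$. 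Surjectivity of $\theta$ is then immediate since $\varphi$ is surjective and $\tau_t$ is an automorphism. The diagram commutes by construction. The main obstacle is exactly this last well-definedness check — i.e.\ showing $\ker \psi \le \ker(\varphi \circ \tau_t)$ — which amounts to verifying that the "defect" between $G$ and the full-dimensional $p$-hypersolvable group is entirely a quotient of the $V$-part, compatible with the common $\alpha$; handling the boundary cases ($\alpha = 1$ excluded by nonabelianity, but small $c_i$ patterns) and the precise choice of $t$ and of the $c_i$ in terms of the given $\varphi$ is where the care goes, and Lemmas~\ref{lem:genpsup} and~\ref{lem:facts} are the tools that make it routine once set up.
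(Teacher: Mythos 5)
The central step of your argument is missing. You define $\psi$ in the standard form and then declare $\theta$ to be ``the unique homomorphism with $\theta(\psi(x_i))=\varphi(\tau_t(x_i))$'', correctly noting that this requires $\ker\psi\le\ker(\varphi\circ\tau_t)$ --- but you never prove this containment, and Lemmas \ref{lem:facts} and \ref{lem:genpsup} do not touch it: they describe elements and generating sets of $p$-hypersolvable groups, not kernels of maps from $F$. This containment is the whole content of Lemma \ref{factor}. Note in particular that $G\cong V_m\rtimes_\alpha C$ with $m\le d-1$ and possibly $m<d-1$ (your opening line already writes $G=V_{d-1}\rtimes_{\alpha_0}C_0$, which is unjustified), so the containment is in general proper and cannot be obtained by comparing orders or by ``checking the relations'' of a presentation you have not written down. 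The paper proves it constructively: it first builds a \emph{surjective} lift $\varphi':F\to G'=V_{d-1}\rtimes_\alpha C$ of $\varphi$ through the projection $\theta':G'\to G$ (using Lemma \ref{primcyclic}, two further automorphisms $\lambda',\lambda''$ of $F$, and a hand-adjusted lift $\eta''$ that inserts the missing coordinates $\iota_{m+1},\dots,\iota_{d-1}$), and only then passes to the standard form by choosing $t$ and an automorphism $\zeta$ of $G'$ absorbed into $\theta=\theta'\circ\zeta$. (The containment you need is in fact true once $\psi$ is surjective and has the same composition to $C$ as $\varphi\circ\tau_t$ --- one can see it, e.g., by computing the multiplicity of the relevant $C$-character in $\ker(F\to C)$ abelianized mod $p$ --- but that is an argument you neither give nor gesture at, and it is not ``routine'' from the cited lemmas.)

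Two further points in your setup are faulty as written. First, a $t$ with $\varphi(x_t)$ generating $C$ modulo $[G,G]$ need not exist (take $C\cong\mathbb{Z}/6\mathbb{Z}$ with generator images of orders $2$ and $3$); your proposed fix of ``folding a Nielsen automorphism into $\psi$'' is not available, because the conclusion of the lemma forces the left-hand vertical map to be exactly the transposition $\tau_t$ and forces $\psi(x_i)=(\iota_i,c_i)$, and precomposing $\psi$ with a Nielsen move destroys both. (This particular defect is repairable: for the surjectivity of your $\psi$ it suffices that $c_d\neq 0$, i.e.\ that $\varphi(x_t)$ be merely nontrivial modulo $[G,G]$, which always holds for some $t$; this is essentially what the automorphism $\lambda'$ in the paper arranges globally.) Second, your edge-case argument for condition (ii) of Lemma \ref{lem:genpsup} is wrong: if $c_d=0$ and only $c_i\neq 0$ among $i\le d-1$, then $[\psi(x_i),\psi(x_d)]=(1-\alpha^{c_d})\iota_i=0$ and $[\psi(x_j),\psi(x_i)]=-[\psi(x_i),\psi(x_j)]$ contributes nothing new, so the commutators span only $\langle \iota_j: j\neq i\rangle$ and $\psi$ is \emph{not} surjective in that case; the claim that ``$[\psi(x_j),\psi(x_i)]$ gives the remaining one'' is false. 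So as it stands the proposal identifies the right shape of statement but leaves its crux unproved and mishandles the choice of $t$.
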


\begin{proof}
By Lemma \ref{diameter}, $G$ must be of the form $V_{m} \rtimes_{\alpha} C$, with $m \leq d-1$ and $1\neq C \leq \mathrm{Aut}(\mathbb{Z}/p\mathbb{Z})\cong (\mathbb{Z}/p\mathbb{Z})^*$ acting diagonally (and nontrivially) on $V_{m}$. Let $\pi:G \to C$ be the canonical homomorphism. By Lemma \ref{primcyclic}, there exists some $\lambda \in \mathrm{Aut}(F)$ such that $\pi(\varphi(\lambda(x_1)))$ generates $C$. Write $x = \pi(\varphi(\lambda(x_1)))$. Then there exists some $\alpha \in \{ 2,\ldots,p-1\}$ such that $v^x = \alpha v$ for every $v \in V_m$. 

Write $G' = V_{d-1} \rtimes_{\alpha} C$. We build a surjective homomorphism $\theta': G' \to G$ by considering the projection $V_{d-1} \to V_{m}$ on the first $m$ coordinates and sending $x$ to $x$. The action is certainly preserved, hence $\theta'$ is a well-defined homomorphism. Naturally, we may view $G$ as a subgroup of $G'$ by identifying $V_m$ with the elements of $V_{d-1}$ with the last $d-1-m$ components equal to 0.

Since $\theta'$ is surjective, it follows from the universal property that there exists some homomorphism $\varphi':F \to G'$ such that $\theta' \circ \varphi' = \varphi$. The question is: can we make $\varphi'$ surjective? We prove it next.

For $i = 2,\ldots,d$, we have $\pi(\varphi(\lambda(x_i))) = x^{q_i}$ for some $0 \leq q_i \leq |C|-1$.  Let $\lambda':F \to F$ be the homomorphism defined by
$$\lambda'(x_i) = \left\{
\begin{array}{ll}
x_1&\mbox{ if }i = 1\\
x_ix_1^{-q_i}&\mbox{ if }1 < i \leq d
\end{array}
\right.$$
Since $\lambda'$ is surjective and $F$ is hopfian, then $\lambda' \in \mathrm{Aut}(F)$. Let $\eta = \varphi \circ \lambda \circ \lambda'$. Then there exist $u_1,\ldots,u_d \in V_m$ such that $\eta(x_1) = u_1x$ and $\eta(x_i) = u_i$ for $i = 2,\ldots,d$. Since $\eta$ is onto, it follows from condition (ii) of Lemma \ref{lem:genpsup} that 
$$\langle [u_1x,u_2],\ldots,[u_1x,u_d]\rangle = V_{m}.$$
Now 
$[u_1x,u_i] = (\alpha-1)u_i$ by Lemma \ref{lem:facts}(ii)
and so $V_m = \langle u_2,\ldots u_d\rangle$.

Since $V_{m}$ is a vector space of dimension $m$ over $\mathbb{Z}/p\mathbb{Z}$, the spanning subset $\{ u_2,\ldots,u_d\}$ must contain some basis. Hence there exists some $\lambda'' \in \mathrm{Aut}(F)$ induced by a permutation $\sigma \in S_d$ such that $\{ u_{\sigma(1)},\ldots,u_{\sigma(m)}\}$ is such a basis and $\sigma(d) = 1$. Let $\eta' = \eta \circ \lambda''$. We show that there exists some surjective homomorphism $\eta'':F \to G'$ such that $\theta' \circ \eta'' = \eta'$, i.e., making the diagram
$$\xymatrix{
F \ar[dr]^{\varphi} & \\
F \ar[u]^{\lambda\circ\lambda'} \ar[r]^{\eta} & G\\
F \ar[u]^{\lambda''} \ar[ur]^{\eta'} \ar[r]_{\eta''} & G' \ar[u]_{\theta'} 
}$$
commute.

Let $\eta'':F \to G'$ be the homomorphism defined by
$$\eta''(x_i) = \left\{
\begin{array}{ll}
\eta'(x_i)&\mbox{ if $1 \leq i \leq m$ or }i = d\\
\eta'(x_i) + \iota_i&\mbox{ if }m < i < d
\end{array}
\right.$$
It is routine to check that $\theta' \circ \eta'' = \eta'$ through the image of the letters. We show that $\eta''$ is onto.

For $i = 1,\ldots,m,d$, we have $\eta''(x_i) = \eta'(x_i) = \eta(\lambda''(x_i)) = \eta(x_{\sigma(i)})$, hence
$$\langle \eta''(x_1),\ldots,\eta''(x_m)\rangle = \langle \eta(x_{\sigma(1)}),\ldots,\eta(x_{\sigma(m)})\rangle = \langle u_{\sigma(1)},\ldots,u_{\sigma(m)}\rangle = V_m$$
and $\eta''(x_d) = \eta(x_{\sigma(d)}) = \eta(x_1) = u_1x$. It follows that $\iota_1,\ldots,\iota_m,x \in \mathrm{Im}\,\eta''$ and so $G= \mathrm{Im}\, \eta' \leq \mathrm{Im}\, \eta''$.  Now it follows that  $\iota_{m+1},\ldots,\iota_{d-1} \in \mathrm{Im}\,\eta''$ as well. Thus $\eta''$ is surjective.

Let $\varphi' = \eta'' \circ (\lambda \circ \lambda' \circ \lambda'')^{-1}$. Then $\varphi':F \to G'$ is a surjective homomorphism such that $\theta' \circ \varphi' = \varphi$.

For $i = 1,\ldots,d$, write $\varphi'(x_i) = v_ix^{c_i}$ with $v_i \in V_{d-1}$ and $c_i\in C$. By Lemmas \ref{lem:facts}(ii) and \ref{lem:genpsup}, $\{v_1,\ldots,v_d\}$ spans $V_{d-1}$.  
Since $V_{d-1}$ is a vector space of dimension $d-1$ over $\mathbb{Z}/p\mathbb{Z}$, the spanning subset $\{ v_1,\ldots,v_d\}$ must contain some basis. Hence there exists some $t \in \{1,\ldots,d\}$ such that 
$\{ v_{\tau_t(1)},\ldots, v_{\tau_t(d-1)} \}$ is a basis of $V_{d-1}$. 

Now we define a homomorphism $\zeta: G' \to G'$ by $\zeta(x) = x$ and $\zeta(\iota_i) = v_{\tau_t(i)}$ $(i = 1,\ldots,d-1)$. It is routine to check that $\zeta$ is a surjective homomorphism, hence an automorphism of $G'$. Finally, let $y_1,\ldots,y_{d-1} \in \mathbb{Z}/p\mathbb{Z}$ be such that $v_t = \sum_{i=1}^{d-1} y_iv_{\tau_t(i)}$. We define a homomorphism $\psi: F \to G'$ by
$$\psi(x_i) = \left\{
\begin{array}{ll}
\iota_ix^{c_{\tau_t(i)}}&\mbox{ if $i \in \{ 1,\ldots,d-1\}$}\\
(y_1,\ldots,y_{d-1})x^{c_t}&\mbox{ if $i = d$.}
\end{array}
\right.$$
We claim that $\zeta \circ \psi \circ \tau_t = \varphi'$. 
Indeed, if $i \neq t$ we get
$$\zeta(\psi(\tau_t(x_i))) = \zeta(\psi(x_{\tau_t(i)})) = \zeta(\iota_{\tau_t(i)}x^{c_i}) = v_ix^{c_i} = \varphi'(x_i).$$
as well as
$$\zeta(\psi(\tau_t(x_t))) = \zeta(\psi(x_d)) = \zeta((\sum_{i=1}^{d-1} y_i\iota_i)x^{c_t}) = (\sum_{i=1}^{d-1} y_iv_{\tau_t(i)})x^{c_t} = 
v_tx^{c_t} = \varphi'(x_t),$$
thus $\zeta \circ \psi \circ \tau_t = \varphi'$. Since $\zeta$ and $\tau_t$ are both automorphisms and $\varphi'$ is surjective, so is $\psi$.

Let $\theta = \theta' \circ \zeta$ so that we have the following commutative diagram
$$\xymatrix{
F \ar[rr]^{\varphi} \ar[dd]_{\tau_t} \ar[dr]_{\varphi'} && G\\
&G' \ar[ur]^{\theta'}&\\
F\ar[rr]_{\psi} &&G'\ar[ul]^{\zeta} \ar[uu]_{\theta}
}$$
Then 
$$\theta \circ \psi \circ \tau_t = \theta' \circ \zeta \circ \psi \circ \tau_t = \theta' \circ \varphi' = \varphi.$$
Since $\zeta \in \mathrm{Aut}\,(G')$ and $\theta'$ is surjective, so is $\theta$,
and the lemma is proved.
\end{proof}

\section{The proof of Theorem \ref{t:main}}\label{s:mainproof}

For an integer $d>1$ we consider the monoid ring
$$R = {\mathbb{Z}}[{\mathbb{Z}}^d \times{\mathbb{N}}^{d-1}] =
{\mathbb{Z}}[\alpha_1,\alpha_1^{-1} \hdots,\alpha_d,\alpha_d^{-1},\beta_1,\ldots,\beta_{d-1}],$$
which we choose to represent as some sort of polynomial ring over the indeterminates $\alpha_1,\ldots,\alpha_d$, $\beta_1,\ldots,\beta_{d-1}$, where the $\alpha_i$ admit inverses.

\begin{defn}
Let $p$ be a prime and $\mathcal{P} \subseteq R$. If there exists some nonzero ring homomorphism $\varphi:R \to \mathbb{Z}/p\mathbb{Z}$ such that
$\varphi(P) = 0$ for every $P \in \mathcal{P}$, we say that
$$(\varphi(\alpha_1), \varphi(\alpha_1^{-1}), \ldots, \varphi(\alpha_d), \varphi(\alpha_d^{-1}), \varphi(\beta_1),\ldots,\varphi(\beta_{d-1}))$$
is a common root modulo $p$ for the polynomials in $\mathcal{P}$.
\end{defn}

Let $T$ denote the additive group of $R$. We define an action of ${\mathbb{Z}}^{d}$ on $T^{d-1}$ by group automorphisms through
$$\begin{array}{rcl}
{\mathbb{Z}}^d \times T^{d-1}&\to&T^{d-1}\\
((k_1,\ldots,k_{d}),(P_1,\ldots,P_{d-1}))&\mapsto&(\alpha_1^{k_1}\ldots \alpha_d^{k_d}P_1,\ldots,\alpha_1^{k_1}\ldots \alpha_d^{k_d}P_{d-1})
\end{array}$$
Since the $\alpha_i$ are invertible, this is indeed a well-defined action, 
inducing a semidirect product $T^{d-1} \rtimes {\mathbb{Z}}^d$. 

Recall that $F$ is a free group of rank $d$ with basis $\{x_1,\ldots,x_d\}$. Also given a positive integer $k$ and an integer $1\leq i \leq k$, we let $\iota_i$ be the element of $\mathbb{Z}^k$ (or $T^k$) having $1$ in the $i$th coordinate and $0$ elsewhere. We define a homomorphism $\xi:F \to T^{d-1} \rtimes {\mathbb{Z}}^d$ by 
$$\xi(x_i) = \left\{
\begin{array}{ll}
(\iota_i,\iota_i)&\mbox{ if }1 \leq i < d\\
(\beta_1,\ldots,\beta_{d-1},\iota_d)&\mbox{ if } i = d.
\end{array}
\right.$$
It follows that
$$\xi(x_i^{-1}) = \left\{
\begin{array}{ll}
(-\alpha_i^{-1}\iota_i,-\iota_i)&\mbox{ if }1 \leq i < d\\
(-\alpha_d^{-1}\beta_1,\ldots,-\alpha_{d}^{-1}\beta_{d-1},-\iota_d)&\mbox{ if } i = d
\end{array}
\right.$$
thus we do not need to consider inverses for the $\beta_i$.
We write $\xi(w) = (\xi_1(w),\xi_2(w))$ for every $w \in F$.

For every prime $p > 2$, 
we define
$$\begin{array}{lll}
\mathcal{Q}(p) = \{ (c,\alpha,u,m)&|&c = (c_1,\ldots,c_d) \in  \{ 0,\ldots,p-2\}^d;\, 2 \leq \alpha,m \leq p-1;\, \alpha^{m} \equiv 1 \mod p;\\ &&\\
&&u = (u_1,\ldots,u_{d-1}) \in \{ 0,\ldots,p-1\}^{d-1} \}.\end{array}$$

Given $Q = (c,\alpha,u,m) \in \mathcal{Q}(p)$, 
let $\varphi_{p,Q}:R \to {\mathbb{Z}}/p{\mathbb{Z}}$ be the ring homomorphism extending the canonical homomorphism $\mathbb{Z} \to \mathbb{Z}/p\mathbb{Z}$ and such that
$$\varphi_{p,Q}(\alpha_i) = \alpha^{c_i},\quad \varphi_{p,Q}(\beta_j) = u_j\quad (1 \leq i \leq d,\; 1 \leq j \leq d-1).$$
Since $\alpha^{m} \equiv 1 \mod p$ and $\alpha\neq 1$, then $V_{d-1} \rtimes_{\alpha} \mathbb{Z}/m\mathbb{Z}$ is a well-defined $p$-hypersolvable group.

\begin{lemma}
Let $p$ be a prime and let $d>1$ be an integer. Given $Q=(c,\alpha,u,m)\in \mathcal{Q}(p)$, let $\Phi_{p,Q}:T^{d-1} \rtimes {\mathbb{Z}}^d \to V_{d-1} \rtimes_{\alpha} \mathbb{Z}/m\mathbb{Z}$ be defined by
$$\Phi_{p,Q}(P_1,\ldots,P_{d-1},k_1,\ldots,k_d) = (\varphi_{p,Q}(P_1),\ldots,\varphi_{p,Q}(P_{d-1}),\sum_{i=1}^d c_ik_i).$$
Then $\Phi_{p,Q}$ is a group homomorphism.
\end{lemma}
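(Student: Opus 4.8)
The plan is to verify directly that $\Phi_{p,Q}$ respects the group operation on the semidirect product $T^{d-1} \rtimes \mathbb{Z}^d$, using the fact that $\varphi_{p,Q}: R \to \mathbb{Z}/p\mathbb{Z}$ is already known to be a ring homomorphism. Recall that in $T^{d-1} \rtimes \mathbb{Z}^d$ the product is
$$(P,k)(P',k') = (P + (\alpha_1^{k_1}\cdots\alpha_d^{k_d})P',\, k+k'),$$
where I abbreviate $P = (P_1,\ldots,P_{d-1})$, $k=(k_1,\ldots,k_d)$, and the scalar $\alpha_1^{k_1}\cdots\alpha_d^{k_d} \in R$ acts componentwise on $T^{d-1}$. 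On the target side, $V_{d-1}\rtimes_\alpha \mathbb{Z}/m\mathbb{Z}$ has product $(v,a)(v',a') = (v + \alpha^a v', a+a')$. So the claim reduces to two things: that the $V_{d-1}$-component of $\Phi_{p,Q}$ is additive-plus-twisted correctly, and that the $\mathbb{Z}/m\mathbb{Z}$-component is additive.

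The $\mathbb{Z}/m\mathbb{Z}$-component is the map $k \mapsto \sum_i c_i k_i \bmod m$, which is manifestly a group homomorphism $\mathbb{Z}^d \to \mathbb{Z}/m\mathbb{Z}$; this handles the second coordinate. For the first coordinate I would compute, for each $j \in \{1,\ldots,d-1\}$,
$$\varphi_{p,Q}\big(P_j + (\alpha_1^{k_1}\cdots\alpha_d^{k_d})P'_j\big) = \varphi_{p,Q}(P_j) + \varphi_{p,Q}(\alpha_1^{k_1}\cdots\alpha_d^{k_d})\,\varphi_{p,Q}(P'_j),$$
using that $\varphi_{p,Q}$ is a ring homomorphism. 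Then $\varphi_{p,Q}(\alpha_1^{k_1}\cdots\alpha_d^{k_d}) = \prod_i \varphi_{p,Q}(\alpha_i)^{k_i} = \prod_i (\alpha^{c_i})^{k_i} = \alpha^{\sum_i c_i k_i}$. The exponent $\sum_i c_i k_i$ is exactly the $\mathbb{Z}/m\mathbb{Z}$-component that $\Phi_{p,Q}$ assigns to $(P,k)$, and since $\alpha^m \equiv 1 \bmod p$ this power of $\alpha$ only depends on $\sum_i c_i k_i \bmod m$, so it agrees with the twisting automorphism $\alpha^a$ applied on the target where $a = \sum_i c_i k_i$. Assembling the two coordinates shows $\Phi_{p,Q}((P,k)(P',k')) = \Phi_{p,Q}(P,k)\Phi_{p,Q}(P',k')$.

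There is essentially no hard part here; the only point requiring a moment's care is the compatibility between "$\alpha^{\text{integer exponent}}$ in $(\mathbb{Z}/p\mathbb{Z})^*$" and "the twisting by an element of $\mathbb{Z}/m\mathbb{Z}$", which is precisely why the condition $\alpha^m \equiv 1 \bmod p$ was imposed in the definition of $\mathcal{Q}(p)$ and why $V_{d-1}\rtimes_\alpha \mathbb{Z}/m\mathbb{Z}$ is a well-defined $p$-hypersolvable group. I would also remark that one should check $\Phi_{p,Q}$ is well-defined as a set map (it is, being built from the ring homomorphism $\varphi_{p,Q}$ and the reduction $\mathbb{Z}^d \to \mathbb{Z}/m\mathbb{Z}$), and that it sends the identity to the identity, which is immediate. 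So the proof is a short direct computation invoking only that $\varphi_{p,Q}$ is a ring homomorphism.
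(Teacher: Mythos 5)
Your proposal is correct and follows essentially the same route as the paper: a direct computation of $\Phi_{p,Q}$ on a product in $T^{d-1}\rtimes\mathbb{Z}^d$, reducing to the identity $\varphi_{p,Q}(P_j+\alpha_1^{k_1}\cdots\alpha_d^{k_d}P'_j)=\varphi_{p,Q}(P_j)+\alpha^{\sum_i c_ik_i}\varphi_{p,Q}(P'_j)$, which holds because $\varphi_{p,Q}$ is a ring homomorphism and $\varphi_{p,Q}(\alpha_1^{k_1}\cdots\alpha_d^{k_d})=\alpha^{\sum_i c_ik_i}$. Your added remarks on well-definedness and on the role of $\alpha^m\equiv 1\bmod p$ are consistent with what the paper establishes just before the lemma.
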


\begin{proof}
Let $Y,Y' \in T^{d-1} \rtimes {\mathbb{Z}}^d$, say $Y = (P_1,\ldots,P_{d-1},k_1,\ldots,k_d)$ and $Y' = (P'_1,\ldots,P'_{d-1},k'_1,\ldots,k'_d)$. Write $\alpha' = \alpha_1^{k_1}\ldots \alpha_d^{k_d}$. Then:
$$\begin{array}{rll}
\Phi_{p,Q}(YY')&=&\Phi_{p,Q}(P_1+\alpha' P'_1,\ldots,P_{d-1}+\alpha' P'_{d-1}, k_1+ k'_1,\ldots,k_d+k'_d)\\ &&\\
&=&(\varphi_{p,Q}(P_1+\alpha' P'_1),\ldots,\varphi_{p,Q}(P_{d-1}+\alpha' P'_{d-1}),\sum_{i=1}^d c_i(k_i+k'_i))\\ &&\\
\Phi_{p,Q}(Y)\Phi_{p,Q}(Y')&=&(\varphi_{p,Q}(P_1),\ldots,\varphi_{p,Q}(P_{d-1}),
\sum_{i=1}^d c_ik_i)\\
&&\hspace{.5cm}\cdot(\varphi_{p,Q}(P'_1),\ldots,\varphi_{p,Q}(P'_{d-1}),\sum_{i=1}^d c_ik'_i)\\ &&\\
&=&(\varphi_{p,Q}(P_1) + \alpha^{\sum_{i=1}^d c_ik_i}\varphi_{p,Q}(P'_1),\\
&&\hspace{.5cm}\ldots,
\varphi_{p,Q}(P_{d-1}) + \alpha^{\sum_{i=1}^d c_ik_i}\varphi_{p,Q}(P'_{d-1}),\sum_{i=1}^d c_i(k_i+k'_i)),
\end{array}$$
hence it suffices to show that
$$\varphi_{p,Q}(P_j+\alpha' P'_j) = \varphi_{p,Q}(P_j) + \alpha^{\sum_{i=1}^d c_ik_i}\varphi_{p,Q}(P'_j)$$
holds for $j = 1,\ldots,d-1$. Indeed, this follows easily from $\varphi_{p,Q}$ being a ring homomorphism and
$$\varphi_{p,Q}(\alpha') = \varphi_{p,Q}(\alpha_1^{k_1}\ldots \alpha_d^{k_d}) = \alpha^{c_1k_1}\ldots \alpha^{c_dk_d} =  \alpha^{\sum_{i=1}^d c_ik_i}.$$
Thus $\Phi_{p,Q}$ is a group homomorphism. 
\end{proof}

Now $\Phi_{p,Q}: T^{d-1} \rtimes {\mathbb{Z}}^d \to V_{d-1} \rtimes_{\alpha} \mathbb{Z}/m\mathbb{Z}$ induces a group homomorphism $\Phi'_{p,Q}:T^{d-1} \to V_{d-1}$ by
$$\Phi'_{p,Q}(P_1,\ldots,P_{d-1}) = \Phi_{p,Q}(P_1,\ldots,P_{d-1},0,\ldots,0).$$

If the rows of a matrix $M$ belong to $T^{d-1}$, then $\Phi'_{p,Q}(M)$ denotes the matrix obtained from $M$ by replacing each row by its image under $\Phi'_{p,Q}$. 

Suppose that $F$ is a free group of finite rank $d>1$ over $X=\{x_1,\ldots,x_d\}$ and $H$ is a finitely generated subgroup of $F$. Let $\{ w_1,\ldots,w_e\}$ be a basis of $H$. Then $\{ \tau_t(w_1),\ldots,\tau_t(w_e)\}$ is a basis of $\tau_t(H)$ for $t = 1,\ldots,d$.
We define an $\binom{e}{2} \times (d-1)$ matrix ${\mathcal{M}}(\tau_t(H))$ over $T$ by taking $\xi_1([\tau_t(w_r),\tau_t(w_s)])$
as row vectors for all $1 \leq r < s \leq e$. 
If $H = F$, we consider the basis $\{ x_1,\ldots,x_d\}$ of $F$, and define a  $\binom{d}{2} \times (d-1)$ matrix ${\mathcal{M}}(F)$ over $T$ by taking $\xi_1([x_r,x_s])$
as row vectors for all $1 \leq r < s \leq d$.

We  now introduce extra indeterminates $\gamma_1, \gamma_2, \gamma_3$. Write
$$R' = {\mathbb{Z}}[\alpha_1,\alpha_1^{-1}, \hdots,\alpha_d,\alpha_d^{-1},\beta_1,\ldots,\beta_{d-1},\gamma_1,\gamma_2,\gamma_3].$$
Let ${\mathcal{Y}}(F)$ denote the set of all $(d-1) \times (d-1)$ minors of ${\mathcal{M}(F)}$. For every $Y \in {\mathcal{Y}}(F)$, let 
$${\mathcal{P}}_Y = \{ \gamma_1\alpha_1\ldots \alpha_d\det(Y) - 1,\;
\alpha_1 + \ldots + \alpha_d - d - \gamma_2,\; \gamma_2\gamma_3 -1\} \subset R'.$$  
The purpose of $\gamma_1$ and $\gamma_3$ is to ensure that, for any common root of these polynomials, $\alpha_1,\ldots,\alpha_d,\det(Y)$ and $\gamma_2$ are all nonzero, and similarly the purpose of $\gamma_2$ is to ensure that, for any common root of these polynomials,  $\alpha_i \neq 1$ for some $i$.  

For $t = 1,\ldots,d$, let ${\mathcal{Y}}(\tau_t(H))$ denote the set of all $(d-1) \times (d-1)$ minors of ${\mathcal{M}}(\tau_t(H))$ and for $Y_0\in \mathcal{Y}(F)$, let 
$${\mathcal{P}}_{Y_0}(\tau_t(H)) = {\mathcal{P}}_{Y_0} \cup \{ \det(Y) \mid Y \in {\mathcal{Y}}(\tau_t(H)) \} \subset R'.$$  

\begin{lemma}
\label{roots}
Let $F$ be a free group of finite rank $d$ over $X=\{x_1,\ldots,x_d\}$. Let $H$ be a finitely generated subgroup of $F$. Suppose that $H$ is $\mathbf{Ab}$-dense.  Then the following conditions are equivalent:
\begin{itemize}
\item[(i)]
$H$ is not $\mathbf{Su}$-dense;
\item[(ii)] there exist some $p \in \mathbb{P}$, $t \in \{ 1,\ldots,d\}$ and $Y_0 \in {\mathcal{Y}(F)}$ 
 such that the polynomials in ${\mathcal{P}}_{Y_0}(\tau_t(H))$ admit a common root modulo $p$.
\end{itemize}
\end{lemma}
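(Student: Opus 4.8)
The plan is to prove the two implications separately. Throughout we use Lemma~\ref{lem:dense}(iii) to rephrase $\mathbf{Su}$-denseness of $H$ as: $HN=F$ for every $N\unlhd F$ with $F/N$ of the form $\mathbb{Z}/p\mathbb{Z}\rtimes C$, $C\le\mathrm{Aut}(\mathbb{Z}/p\mathbb{Z})$. Since $H$ is $\mathbf{Ab}$-dense, the composite $H\hookrightarrow F\twoheadrightarrow A$ is onto for every abelian quotient $A$ of $F$; in particular a quotient $\mathbb{Z}/p\mathbb{Z}\rtimes C$ with $C=1$ can never witness non-denseness, so in the non-denseness direction we may always assume $C\neq1$ (hence $p>2$). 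The bridge between ``a surjection onto a primitive supersolvable group with proper image of $H$'' and ``vanishing of minors of ${\mathcal{M}}(\tau_t(H))$'' is Lemma~\ref{factor} (which straightens such a surjection into the normal form $\psi(x_i)=(\iota_i,c_i)$ handled by $\xi$) together with Lemmas~\ref{lem:genpsup} and~\ref{lem:facts}(ii) (which say that for a $p$-hypersolvable group $V_{d-1}\rtimes_\alpha C$ the image of a subgroup is everything exactly when the relevant commutators span $V_{d-1}$, and that these commutators are read off by $\xi_1$).

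\textbf{(ii)$\Rightarrow$(i).} Suppose the polynomials in ${\mathcal{P}}_{Y_0}(\tau_t(H))$ have a common root modulo $p$, realised by a ring homomorphism $\varphi_0\colon R'\to\mathbb{Z}/p\mathbb{Z}$. From ${\mathcal{P}}_{Y_0}$ (via $\gamma_1,\gamma_2,\gamma_3$) we read that $a_i:=\varphi_0(\alpha_i)\neq0$, that not all $a_i$ equal $1$, and that $\varphi_0(\det Y_0)\neq0$; from the polynomials $\det(Y)$, $Y\in{\mathcal{Y}}(\tau_t(H))$, we read that every $(d-1)\times(d-1)$ minor of ${\mathcal{M}}(\tau_t(H))$ vanishes under $\varphi_0$. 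Set $C=\langle a_1,\dots,a_d\rangle\le(\mathbb{Z}/p\mathbb{Z})^*$ (nontrivial), pick a generator $\alpha$ of $C$, write $a_i=\alpha^{c_i}$, let $m=|C|=\mathrm{ord}(\alpha)$ and $u=(\varphi_0(\beta_1),\dots,\varphi_0(\beta_{d-1}))$; then $Q=(c,\alpha,u,m)\in{\mathcal{Q}}(p)$ and $\varphi_{p,Q}$ agrees with $\varphi_0$ on the $\alpha_i,\beta_j$. Put $\psi=\Phi_{p,Q}\circ\xi\colon F\to G':=V_{d-1}\rtimes_\alpha\mathbb{Z}/m\mathbb{Z}$. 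The rows of $\Phi'_{p,Q}({\mathcal{M}}(F))$ are the elements $[\psi(x_r),\psi(x_s)]\in V_{d-1}$; since $\varphi_{p,Q}(\det Y_0)\neq0$ they span $V_{d-1}$, and the $c_i$ generate $\mathbb{Z}/m\mathbb{Z}$, so $\psi$ is surjective by Lemma~\ref{lem:genpsup}. Likewise the rows of $\Phi'_{p,Q}({\mathcal{M}}(\tau_t(H)))$ are the commutators $[\psi(\tau_t(w_r)),\psi(\tau_t(w_s))]$; as all $(d-1)$-minors vanish they span a subspace of $V_{d-1}$ of dimension $\le d-2$, which we enlarge to a hyperplane $W\subsetneq V_{d-1}$. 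Being a subspace, $W$ is $C$-invariant hence normal in $G'$, and $G'/W\cong\mathbb{Z}/p\mathbb{Z}\rtimes C$ is primitive supersolvable (Lemma~\ref{l:supprim}). Then $q\circ\psi\circ\tau_t\colon F\to\mathbb{Z}/p\mathbb{Z}\rtimes C$ (with $q$ the quotient by $W$) is surjective, but the commutators of the $q\psi\tau_t(w_r)$ are all trivial (their $\mathbb{Z}/p\mathbb{Z}$-parts lie in $W$), so by Lemma~\ref{lem:genpsup} the image of $H$ is proper. Hence $H$ is not $\mathbf{Su}$-dense.

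\textbf{(i)$\Rightarrow$(ii).} Suppose $H$ is not $\mathbf{Su}$-dense, so there are a prime $p$, a nontrivial $C\le(\mathbb{Z}/p\mathbb{Z})^*$ and a surjection $\varphi\colon F\to G:=\mathbb{Z}/p\mathbb{Z}\rtimes C$ with $\varphi(H)\neq G$. Applying Lemma~\ref{factor} to $\varphi$ (here $G$ is a nonabelian $p$-hypersolvable group) yields $t\in\{1,\dots,d\}$, a surjection $\psi\colon F\to G':=V_{d-1}\rtimes_\alpha C$ with $\psi(x_i)=(\iota_i,c_i)$ for $i<d$, and a surjection $\theta\colon G'\to G$ with $\theta\circ\psi\circ\tau_t=\varphi$; encoding $(c,\alpha,u,|C|)$ (where $u$ is the $V_{d-1}$-part of $\psi(x_d)$) as $Q\in{\mathcal{Q}}(p)$, we get $\psi=\Phi_{p,Q}\circ\xi$. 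Surjectivity of $\psi$ forces (Lemmas~\ref{lem:genpsup} and~\ref{lem:facts}(ii)) the rows of $\Phi'_{p,Q}({\mathcal{M}}(F))$ to span $V_{d-1}$, so $\varphi_{p,Q}(\det Y_0)\neq0$ for some $Y_0\in{\mathcal{Y}}(F)$, and since $C\neq1$ not all $\varphi_{p,Q}(\alpha_i)$ equal $1$. Next, $\ker\theta$ is a $C$-invariant hyperplane $W^*\subseteq V_{d-1}$ with $G\cong G'/W^*$; as $\varphi(H)=\theta(\psi(\tau_t(H)))$ is proper while $\psi(\tau_t(H))$ surjects onto $C$ (so condition (i) of Lemma~\ref{lem:genpsup} holds for it, by $\mathbf{Ab}$-denseness), condition (ii) must fail in $G'/W^*$: the $V_{d-1}$-parts of the commutators $[\psi(\tau_t(w_r)),\psi(\tau_t(w_s))]$ all lie in $W^*$, so every $(d-1)$-minor of $\Phi'_{p,Q}({\mathcal{M}}(\tau_t(H)))$ vanishes, i.e.\ all $\det(Y)$, $Y\in{\mathcal{Y}}(\tau_t(H))$, vanish under $\varphi_{p,Q}$. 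Finally one checks that the polynomials of ${\mathcal{P}}_{Y_0}$ can be solved consistently: $\prod_i\varphi_{p,Q}(\alpha_i)$ and $\varphi_{p,Q}(\det Y_0)$ are invertible, which fixes $\gamma_1$; and provided $\sum_i\varphi_{p,Q}(\alpha_i)\neq d$ in $\mathbb{Z}/p\mathbb{Z}$ one sets $\gamma_2=\sum_i\varphi_{p,Q}(\alpha_i)-d$ and $\gamma_3=\gamma_2^{-1}$. Choosing the witness $\varphi$ so that this last inequality holds then exhibits a common root of ${\mathcal{P}}_{Y_0}(\tau_t(H))$ modulo $p$.

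\textbf{Where the difficulty lies.} The routine parts are the dictionary between the abstract ``$(\varphi,\theta,\psi,\tau_t)$'' of Lemma~\ref{factor} and a point of ${\mathcal{Q}}(p)$, and the translation of surjectivity/properness into rank conditions on $\Phi'_{p,Q}$ of the matrices ${\mathcal{M}}(F)$ and ${\mathcal{M}}(\tau_t(H))$. The genuinely delicate point is the last sentence of the (i)$\Rightarrow$(ii) argument: an arbitrary witness $\varphi$ need not satisfy $\sum_i\mathrm{(action\ of\ }\pi\varphi(x_i))\neq d$ (this sum is an invariant of $\varphi$), so one must argue that, whenever $H$ is not $\mathbf{Su}$-dense, a witness can be chosen — for instance with a smaller acting group $C$, or at another prime — for which the $\gamma_2$-condition $\sum_i\alpha_i\neq d$ (which only guarantees, not detects, nontriviality of $C$) can be met; this is the step I expect to require the most care.
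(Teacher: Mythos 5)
Your route is the same as the paper's: reduce via Lemma~\ref{lem:dense}(iii) and $\mathbf{Ab}$-denseness to nonabelian quotients $\mathbb{Z}/p\mathbb{Z}\rtimes C$, straighten a witness with Lemma~\ref{factor}, identify $\psi$ with $\Phi_{p,Q}\circ\xi$, and translate surjectivity/properness into rank conditions on $\Phi'_{p,Q}(\mathcal{M}(F))$ and $\Phi'_{p,Q}(\mathcal{M}(\tau_t(H)))$ via Lemmas~\ref{lem:genpsup} and~\ref{lem:facts}(ii). Your (ii)$\Rightarrow$(i) argument is correct: passing to a hyperplane containing the span of the commutators and quotienting onto a primitive group $\mathbb{Z}/p\mathbb{Z}\rtimes C$ is a harmless variant of the paper's contradiction argument inside $G'=V_{d-1}\rtimes_\alpha C$ itself. (Only note that Lemma~\ref{lem:genpsup} does not literally apply to the image of $H$, which need not be generated by $d$ elements; what you actually use, both there and in (i)$\Rightarrow$(ii), is the easy fact that a proper subgroup of $\mathbb{Z}/p\mathbb{Z}\rtimes C$ which surjects onto $C$ meets $\mathbb{Z}/p\mathbb{Z}$ trivially and hence is abelian.)

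The genuine gap is the one you flag yourself at the end of (i)$\Rightarrow$(ii). A common root of $\mathcal{P}_{Y_0}(\tau_t(H))$ must satisfy $\gamma_2=\sum_i\alpha_i-d$ and $\gamma_2\gamma_3=1$, hence $\sum_i\varphi_{p,Q}(\alpha_i)=\sum_i\alpha^{c_i}\not\equiv d\pmod p$; and, as you correctly observe, the multiset $\{\alpha^{c_1},\ldots,\alpha^{c_d}\}$ is pinned down by the witness kernel $N$ (it is the conjugation character of $F/N$ evaluated on the basis, up to the permutation $\tau_t$), so ``choosing the witness so that this inequality holds'' is not something you are entitled to without an argument --- and you give none, so your proof of (i)$\Rightarrow$(ii) is incomplete as written. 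You should know, however, that the paper's own proof does not close this point either: it asserts that it suffices to verify its conditions (A1)--(A4), where (A3) is only that $\varphi_{p,Q}(\alpha_i)\neq 1$ for some $i$, and this is strictly weaker than the condition $\sum_i\varphi_{p,Q}(\alpha_i)\neq d$ that the polynomials $\alpha_1+\cdots+\alpha_d-d-\gamma_2$ and $\gamma_2\gamma_3-1$ actually impose (already for $d=2$, $p=7$ the values $3$ and $6$ sum to $2=d$ without all being $1$). So the ``delicate point'' you isolate is precisely the step the paper glosses over rather than an idea you failed to find in it; note also that it concerns the chosen polynomial encoding rather than the group theory, since replacing the two $\gamma_2,\gamma_3$-polynomials by a single polynomial $\delta_1(\alpha_1-1)+\cdots+\delta_d(\alpha_d-1)-1$ in fresh variables $\delta_1,\ldots,\delta_d$ encodes exactly ``some $\alpha_i\neq 1$'', after which your argument (and the paper's) closes with no further choice of witness needed.
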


\begin{proof}
We are given a basis $W=\{w_1,\ldots,w_e\}$ of $H$ where  for $1\leq i \leq e$, $w_i$ is a reduced word over $X\cup X^{-1}$.
Note that if $d=1$ then $F\cong \mathbb{Z}$ is abelian and so $H$ is $\mathbf{Su}$-dense (as it is $\mathbf{Ab}$-dense). We therefore assume that $d>1$.
Since $H$ is $\mathbf{Ab}$-dense and $\mathbf{Ab}(2) \subset \mathbf{Ab}$, $H$ is $\mathbf{Ab}(2)$-dense by (\ref{e:cl}), and so $H$ satisfies condition $Q_2(H,F)$. 
(i) $\Rightarrow$ (ii). 
By Lemma \ref{lem:dense}(iii), $H$ fails condition $Q_p(H,F)$ for some prime $p$ which must be odd. Hence there exists some $N \unlhd F$ such that $HN < F$ and $F/N\cong \mathbb{Z}/p\mathbb{Z} \rtimes_{\alpha} C$ for some  $C\leq {\rm Aut}(\mathbb{Z}/p\mathbb{Z})\cong(\mathbb{Z}/p\mathbb{Z})^*$ and some $\alpha \in \{1,\dots,p-1\}$. Since $H$ is $\mathbf{Ab}$-dense, then $F/N$ must be nonabelian and so $C = \mathbb{Z}/m\mathbb{Z}$ for some $m>1$ dividing $p-1$ and $\alpha\in \{2,\dots,p-1\}$. Let $\varphi:F \to F/N$ be the canonical homomorphism. 
By Lemma \ref{factor}, there exist some $t \in \{ 1,\ldots,d\}$, and surjective homomorphisms $\psi: F \to V_{d-1} \rtimes_{\alpha} C$ and $\theta: V_{d-1} \rtimes_{\alpha} C \to F/N$ such that $\theta \circ \psi \circ \tau_t = \varphi$ and, for $i = 1,\ldots,d-1$, $\psi(x_i) = (\iota_i,c_i)$ for some $c_i \in C$.
Write $\psi(x_d) = (u_1,\ldots,u_{d-1},c_d)$, $c = (c_1,\ldots,c_d)$ and $u = (u_1,\ldots,u_{d-1})$. Since $\alpha^m \equiv 1 \mod p$ is a requirement for the action being well defined, we get 
$Q = (c,\alpha,u,m) \in \mathcal{Q}(p)$.

Consider  now the ring homomorphism $\varphi_{p,Q}:R \to \mathbb{Z}/p\mathbb{Z}$. We use this homomorphism to find a common root modulo $p$ for all the polynomials in ${\mathcal{P}}_{Y_0}(\tau_t(H))$ for some $Y_0 \in {\mathcal{Y}}(F)$. We do not need to worry about the auxiliary indeterminates $\gamma_i$ if we understand their purpose. That is, it suffices to show that:
\begin{itemize}
\item[(A1)]
$\varphi_{p,Q}(\det(Y_0)) \neq 0$ for some $Y_0 \in {\mathcal{Y}}(F)$;
\item[(A2)]
$\varphi_{p,Q}(\alpha_i) \neq 0$ for $i = 1,\ldots,d$;
\item[(A3)]
$\varphi_{p,Q}(\alpha_i) \neq 1$ for some $1 \leq i \leq d$;
\item[(A4)]
$\varphi_{p,Q}(\det(Y)) = 0$ for every $Y \in {\mathcal{Y}}(\tau_t(H))$.
\end{itemize}

Recall now $\Phi_{p,Q}$, $\Phi_{p,Q}'$, $\xi$ and $\mathcal{M}:=\mathcal{M}(F)$. We consider  the composition 
$$\Phi_{p,Q} \circ \xi: F \to V_{d-1} \rtimes_{\alpha} \mathbb{Z}/m\mathbb{Z} = V_{d-1} \rtimes_{\alpha} C.$$
It follows from the definitions that $\Phi_{p,Q}(\xi(x_i)) = \psi(x_i)$ for $i = 1,\ldots,d$, hence $\Phi_{p,Q} \circ \xi = \psi$.

Since $\psi$ is surjective, it is easy to see that $C = \langle c_1,\ldots,c_d\rangle$ and Lemma \ref{lem:genpsup} yields
$$\langle [\Phi_{p,Q}(\xi(x_i)),\Phi_{p,Q}(\xi(x_j))] \mid 1 \leq i < j \leq d\rangle = V_{d-1},$$
hence
$$\langle \Phi'_{p,Q}(\xi_1([x_i,x_j])) \mid 1 \leq i < j \leq d\rangle = V_{d-1}.$$
This is equivalent to saying that the row vectors of $\Phi'_{p,Q}({\mathcal{M}})$ generate $V_{d-1}$, hence this matrix has necessarily rank $d-1$.  If
$${\mathcal{M}} = 
\begin{pmatrix}
P_{1,1}&\ldots&P_{1,d-1}\\
\ldots&\ldots&\ldots\\
P_{m,1}&\ldots&P_{m,d-1}
\end{pmatrix}$$
then
$$\Phi'_{p,Q}({\mathcal{M}}) = 
\begin{pmatrix}
\varphi_{p,Q}(P_{1,1})&\ldots&\varphi_{p,Q}(P_{1,d-1})\\
\ldots&\ldots&\ldots\\
\varphi_{p,Q}(P_{m,1})&\ldots&\varphi_{p,Q}(P_{m,d-1})
\end{pmatrix}$$
Since $\Phi'_{p,Q}({\mathcal{M}})$ has rank $d-1$ and $\varphi_{p,Q}$ is a ring homomorphism, there exists some 
$Y_0 \in \mathcal{Y}(F)$ such that $\varphi_{p,Q}(\det(Y_0)) \neq 0$. Thus (A1) holds.

On one hand, $\varphi_{p,Q}(\alpha_i) = \alpha^{c_i} \neq 0$ since $2 \leq \alpha \leq p-1$, hence (A2) holds. On the other hand, $C = \langle c_1,\ldots,c_d\rangle$ nontrivial implies $c_i \neq 0$ for some $i$, therefore (A3) holds as well.

Finally, let $Y \in {\mathcal{Y}}(\tau_t(H))$. Suppose that $\varphi_{p,Q}(\det(Y)) \neq 0$. The same argument used above shows that the row vectors of $\Phi'_{p,Q}({\mathcal{M}}(\tau_t(H)))$ generate $V_{d-1}$. Since $\Phi_{p,Q} \circ \xi = \psi$, we get
$$\langle [\psi(\tau_t(w_i)),\psi(\tau_t(w_j))] \mid 1 \leq i < j \leq e\rangle = V_{d-1},$$
hence $V_{d-1} \leq \psi(\tau_t(H))$.
On the other hand, since $\tau_t(H)$ is $\mathbf{Ab}$-dense by Lemma \ref{autospres}, we can compose $\psi$ with the canonical homomorphism $\pi: V_{d-1} \rtimes_{\alpha} C \to C$ to derive $$\tau_t(H)\mathrm{ker}(\pi\circ \psi)=F$$ and consequently $\pi(\psi(\tau_t(H))) = C$. Together with $V_{d-1} \leq \psi(\tau_t(H))$, this yields 
$\psi(\tau_t(H)) = V_{d-1} \rtimes_{\alpha} C$ and so $(\tau_t(H))\mathrm{Ker}\,\psi = F$.

Let $g \in F$. Then $\tau_t(g) = (\tau_t(h))z$ for some $h \in H$ and $z \in \mathrm{Ker}\,\psi$. Hence
$$g = \tau_t^2(g) = h\tau_t(z).$$
Since $\psi(\tau_t(\tau_t(z))) = \psi(z) = 1$, we get $H\mathrm{Ker}(\psi\circ \tau_t) = F$. Since $\theta \circ \psi \circ \tau_t = \varphi$, then $\mathrm{Ker}(\psi\circ \tau_t) \leq\mathrm{Ker}\,\varphi = N$ and so $HN = F$, a contradiction. Therefore (A4) holds and so does condition (ii).

(ii) $\Rightarrow$ (i). 
Assume that condition (ii) holds for $p \in \mathbb{P}$, $Y_0 \in {\mathcal{Y}}(F)$ and $t \in \{ 1,\ldots,d\}$. Let $a_1,\ldots,a_{d},u_1,\ldots,u_{d-1},g_1,g_2,g_3$ be a common root modulo $p$ for the polynomials in ${\mathcal{P}}_{Y_0}(\tau_t(H))$. 

Since $a_i \not\equiv 0 \mod p$ for $i = 1,\ldots,d$, we can write $C = \langle a_1,\ldots, a_{d} \rangle \leq (\mathbb{Z}/p\mathbb{Z})^*$. Note that $a_i \not\equiv 1 \mod p$ for some $i$, hence $|C| > 1$.
Let $x$ denote a generator of $C$ and let $\alpha \in \{ 2,\ldots,p-1\}$ represent $x$ modulo $p$. For $i = 1,\ldots,d$, we have $a_i = \alpha^{c_i}$ for some $0 \leq c_i \leq |C|-1$. Write $c = (c_1,\ldots,c_d)$ and $u = (u_1,\ldots,u_{d-1})$. Since $\alpha^{|C|} \equiv 1 \mod p$, we can define $Q = (c,\alpha,u,|C|) \in \mathcal{Q}(p)$.

We consider now the $p$-hypersolvable group $G = V_{d-1} \rtimes_{\alpha} C$, viewing $C$ as $\mathbb{Z}/|C|\mathbb{Z}$ (hence $C = \langle c_1,\ldots,c_d\rangle$). 
Let $\psi:F \to G$ be the homomorphism defined by
$$\psi(x_i) = \left\{
\begin{array}{ll}
(\iota_i,c_i)&\mbox{ if }1 \leq i < d\\
(u_1,\ldots,u_{d-1},c_d)&\mbox{ if }i = d.
\end{array}
\right.$$ 
Once again, we have $\Phi_{p,Q} \circ \xi = \psi$. We prove that $\psi$ is surjective, by showing that $\psi(x_1),\ldots, \psi(x_d)$ satisfy the conditions of Lemma \ref{lem:genpsup}.

On the one hand, 
$$\mathrm{gcd}(|C|, c_1,\ldots,c_d) = 1$$
follows from $\langle c_1,\ldots,c_d \rangle = C = \mathbb{Z}/|C|\mathbb{Z}$, and so condition (i) of Lemma \ref{lem:genpsup} is satisfied.

On the other hand,
$$[\psi(x_i),\psi(x_j)] = [\Phi_{p,Q}(\xi(x_i)),\Phi_{p,Q}(\xi(x_j))] = \Phi'_{p,Q}(\xi_1([x_i,x_j]))$$
holds for all $1 \leq i < j \leq d$, and once again we are applying the ring homomorphism $\varphi_{p,Q}$ to each of the entries of the matrix $\mathcal{M}(F)$. By our assumption, the corresponding equality of type (A1) is satisfied, that is $\varphi_{p,Q}(\det(Y_0)) \neq 0$, which implies that the matrix with rows $[\psi(x_i),\psi(x_j)]$ has rank $d-1$. Therefore condition (ii) of Lemma \ref{lem:genpsup} is satisfied and so $\psi$ is surjective. 

Let $N = \mathrm{Ker}\,\psi \unlhd F$. Next we prove that $\tau_t(H)N < F$, which implies that $\tau_t(H)$ is not $\mathbf{Su}$-dense. By Lemma \ref{autospres}, $H$ is not $\mathbf{Su}$-dense either.

Indeed, suppose that $\tau_t(H)N = F$. Then $\psi(\tau_t(H)) = G$ and so Lemma \ref{lem:genpsup} implies that
$$\langle  [\psi(\tau_t(w_i)),\psi(\tau_t(w_j))] \mid 1 \leq i < j \leq e\rangle = V_{d-1},$$
yielding
$$\langle  [\Phi_{p,Q}(\xi(\tau_t(w_i))),\Phi_{p,Q}(\xi(\tau_t(w_j)))] \mid 1 \leq i < j \leq e\rangle = V_{d-1}$$
and consequently
$$\langle  \Phi'_{p,Q}(\xi_1([\tau_t(w_i),\tau_t(w_j)])) \mid 1 \leq i < j \leq e\rangle = V_{d-1}.$$
It follows that the matrix obtained by applying $\varphi_{p,Q}$ to every entry of ${\mathcal{M}}(\tau_t(H))$ has rank $d-1$, contradicting condition (A4), which follows from our assumption. Therefore $\tau_t(H)N < F$ and so $H$ is not $\mathbf{Su}$-dense.
\end{proof}

We can now prove Theorem \ref{t:main}.

\textit{Proof of Theorem \ref{t:main}.}
Since $\mathbf{Ab} \subset \mathbf{Su}$, it follows from (\ref{e:cl}) that being $\mathbf{Ab}$-dense is a necessary condition for being $\mathbf{Su}$-dense. Since $\mathbf{Ab}$-denseness is decidable by \cite[Theorem 4.4]{MSTmeta}, we may assume that $H$ is $\mathbf{Ab}$-dense. Also by \cite[Corollary 2.4]{MSTcyclic}, if $F$ is not of finite rank then $H$ is not $\mathbf{Su}$-dense. We can therefore assume that $F$ is of finite rank $d$. We assume that $d>1$, as if $d=1$ then $F\cong \mathbb{Z}$ is abelian and so $H$ is $\mathbf{Su}$-dense (as it is $\mathbf{Ab}$-dense).
By Lemma \ref{roots} $H$ is not $\mathbf{Su}$-dense if and only if  there exist some $p \in \mathbb{P}$, $t \in \{ 1,\ldots,d\}$ and $Y_0 \in {\mathcal{Y}(F)}$  such that the polynomials in ${\mathcal{P}}_{Y_0}(\tau_t(H))$ admit a common root modulo $p$.

We fix $t \in \{ 1,\ldots,d\}$ and $Y_0 \in {\mathcal{Y}}(F)$. To complete the proof of the theorem, it suffices to show that it is decidable whether or not there is a prime $p$ such that the polynomials in ${\mathcal{P}}_{Y_0}(\tau_t(H))$ admit a common root modulo $p$.
The first thing we need to do is to replace ${\mathcal{P}}_{Y_0}(\tau_t(H)) \subset R'$ by some equivalent subset of polynomials 
$${\mathcal{P}}'_{Y_0}(\tau_t(H)) \subset R'' = {\mathbb{Z}}[\alpha_1, \hdots,\alpha_d,\beta_1,\ldots,\beta_{d-1},\gamma_1,\gamma_2,\gamma_3].$$

Indeed, there exists some $N \geq 1$ such that the matrices 
$$\alpha_1^N\ldots \alpha_d^N\mathcal{M}(F) \quad \textrm{and} \quad \alpha_1^N\ldots \alpha_d^N\mathcal{M}(\tau_t(H))$$
have all the entries in $R''$. Let 
$$\begin{array}{lll}
\mathcal{P}'_{Y_0}(\tau_t(H))&=&\{\gamma_1\alpha_1^N\ldots \alpha_d^N \det(Y_0)-1, \alpha_1 + \ldots +\alpha_d -d-\gamma_2, \gamma_2\gamma_3 -1\}\\ &&\\
&\cup&\{ \det(Y) \mid Y \in \alpha_1^N\ldots \alpha_d^N \mathcal{Y}(\tau_t(H)) \} \subset R''.
\end{array}$$
Any common root modulo $p$ for the polynomials in ${\mathcal{P}}_{Y_0}(\tau_t(H))$  induces by restriction  
a common  root modulo $p$ for the polynomials in ${\mathcal{P}}'_{Y_0}(\tau_t(H))$ (adapting the value of $\gamma_1$, which unique purpose is to force $\alpha_1,\ldots,\alpha_d,\mathrm{det}(Y)$ to assume nonzero values). The converse holds because all the $\alpha_i$ are forced to assume nonzero values, so all we need is to interpret $\alpha_i^{-1}$ as the inverse of $\alpha_i$ (resetting appropriately the value of $\gamma_1$).
It now suffices to show that it is decidable whether or not
$$S({\mathcal{P}}'_{Y_0}(\tau_t(H))) \neq \emptyset.$$
Since ${\mathcal{P}}'_{Y_0}(\tau_t(H))$ is a finite subset of $R''$, it follows from Theorem \ref{t:jar} that there exists some  $f \in \mathbb{Z}[\alpha]$ such that $S({\mathcal{P}}'_{Y_0}(\tau_t(H))) = S(f)$. Since $S(f) = \emptyset$ if and only if $f = \pm 1$, Corollary \ref{c:sys} yields the result. $\square$

Thus we have answered positively Question 1 for the pseudovariety $\mathbf{V}=\mathbf{Su}$, however Questions 2-5 remain open at this stage (for $\mathbf{V}=\mathbf{Su}$). 

\section*{Acknowledgements}

The first author acknowledges support from the Centre of Mathematics of the University of Porto, which is financed by national funds through the Funda\c c\~ao para a Ci\^encia e a Tecnologia, I.P., under the project with references UIDB/00144/2020 and  UIDP/00144/2020.

The second author acknowledges support from the Centre of Mathematics of the University of Porto, which is financed by national funds through the Funda\c c\~ao para a Ci\^encia e a Tecnologia, I.P., under the project with reference UIDB/00144/2020. 

The third author was supported by the Engineering and Physical Sciences Research Council, grant number 
EP/T017619/1.

\vspace{1cm}

{\sc Claude Marion, Centro de
Matem\'{a}tica, Faculdade de Ci\^{e}ncias, Universidade do
Porto, R. Campo Alegre 687, 4169-007 Porto, Portugal}

{\em E-mail address}: claude.marion@fc.up.pt

\bigskip

{\sc Pedro V. Silva, Centro de
Matem\'{a}tica, Faculdade de Ci\^{e}ncias, Universidade do
Porto, R. Campo Alegre 687, 4169-007 Porto, Portugal}

{\em E-mail address}: pvsilva@fc.up.pt

\bigskip

{\sc Gareth Tracey, Mathematics Institute, University of Warwick, Coventry CV4 7AL, U.K.}

{\em E-mail address}: Gareth.Tracey@warwick.ac.uk

\end{document}